\documentclass{amsart}[12pt]
\usepackage{mathrsfs, amsmath,amssymb}
\usepackage{fullpage}
\usepackage{graphicx}
\newtheorem{teo}{Theorem}
\newtheorem{pro}{Proposition}

\newtheorem{cor}{Corollary}
\newtheorem*{remark}{Remark}

\title{Generalized Stochastic areas and windings arising from Anti-de Sitter and Hopf fibrations}


\author[N. Demni]{Nizar Demni}
\address{IRMAR, Universit\'e de Rennes 1\\ Campus de
Beaulieu\\ 35042 Rennes cedex\\ France}
\email{nizar.demni@univ-rennes1.fr}

\keywords{Anti-de Sitter fibration; Hopf fibration; complex Hyperbolic ball; complex projectif space, Real hyperbolic space; Subelliptic heat kernel; Generalized Maass Laplacian.}
\subjclass[2010]{60J60; 53C17}
\begin{document}
\maketitle

\begin{abstract}
In the first part of this paper, we derive explicit expressions of the semi-group densities of generalized stochastic areas arising from the Anti-de Sitter and the Hopf fibrations. Motivated by the number-theoretical connection between the Heisenberg group and Dirichlet series, we express the Mellin transform of the generalized stochastic area corresponding to the one-dimensional Anti de Sitter fibration as a series of Riemann Zeta function evaluated at integers. In the second part of the paper, we derive fixed-time marginal densities of winding processes around the origin in the Poincar\'e disc and in the complex projective line. 
\end{abstract}

\section{Motivation: The Heisenberg group case}
The L\'evy stochastic area
\begin{equation*}
A_t:= \int_0^t B_s^1dB_s^2 - B_s^2dB_s^1, \quad t \geq 0, 
\end{equation*}
where $B := (B^1, B^2)$ is a planar Brownian motion, is a very interesting object in both probability theory and mathematical physics (\cite{Gav}). It arises naturally from the heat kernel of the three-dimensional Heisenberg group 
$H_3 = \mathbb{C} \times \mathbb{R}$ since the latter is endowed with the standard contact form written in local coordinates $(x,y,t)$: 
\begin{equation*}
\eta_H := dt + (xdy - ydx). 
\end{equation*}
This form is actually the pull-back of the standard K\"ahler form 
\begin{equation*}
\alpha_H := xdy - ydx 
\end{equation*}
on $\mathbb{C}$ with respect to the fibration: 
\begin{equation*}
\pi: H_3 \rightarrow \mathbb{C}, \quad (x,y,t) \mapsto (x,y). 
\end{equation*}
The L\'evy stochastic area has also a beautiful connection to the Dirichlet series 
\begin{equation*}
L(s)=\sum_{n=0}^{+\infty} \frac{(-1)^n}{(2n+1)^s}
\end{equation*}
associated with the Dirichlet character of $\mathbb{Z}/4\mathbb{Z}$. More precisely, for every $\mu \geq 0$, the following results holds:
\begin{equation}\label{Mel1}
\mathbb{E}(|A_t|^{\mu})=\frac{4 \Gamma(1+\mu)}{\pi^{1+\mu}} L(1+\mu)t^{\mu},
\end{equation}
where $\mathbb{E}$ stands for the expectation of the underlying probability space in which $B$ is defined. At the heart of the derivation of \eqref{Mel1} is the Dubins-Schwarz Theorem (\cite{Rev-Yor}) together with the knowledge of the Mellin transform:
\begin{equation}\label{Mel2}
\mathbb{E}\left(\int_0^1 R_s^2 ds\right)^{\mu/2}  
\end{equation} 
where $(R_s)_{s \geq 0}$ is a two-dimensional Bessel process (see Table 2 in \cite{BPY}). 

In \cite{Bau-Wan}, generalized L\'evy stochastic areas were defined by replacing the horizontal part of $\eta_H$, that is $\alpha_H$, by those of contact forms of the Anti-de Sitter and the Hopf fibrations. Likewise, these contact forms are the pull-back of  standard 
K\"ahler forms of the hyperbolic ball and the complex projectif space respectively. There, the authors use Girsanov Theorem to derive the characteristic functions of the generalized L\'evy stochastic areas at any fixed time $t > 0$ and prove that they converge in distribution as $t \rightarrow +\infty$ to Gaussian and Cauchy random variables respectively. However, the densities of these random areas were missed and we shall derive them in the first part of this paper. For the Anti de Sitter fibration, our strategy shows a close connection with the so-called generalized Maass Laplacian and has the merit to make transparent the convergence in distribution to the Gaussian random variable. Moreover, the Maass Laplacian in the Poincar\'e disc has its origin in number theory and in this complex one-dimensional setting, we shall further express the Mellin transform of the generalized stochastic area as a series of Riemann zeta functions. Doing so establishes a connection with number-theoretical objects in the same spirit \eqref{Mel1} and \eqref{Mel2} do. As to the Hopf fibration, the derivation of the density of the corresponding generalized stochastic area is rather direct. Indeed, we shall work out the integral representation of its characteristic function proved in \cite{Bau-Wan} using the circular Jacobi semi-group density and obtain a series of Cauchy kernels which we invert termwise. 

The second part of the paper is concerned with the winding processes around the origin in the Poincar\'e disc and in the complex projective line. The generators of these processes are naturally defined as the angular parts of the Brownian motions on the corresponding geometrical models. The characteristic functions of their fixed-time marginals were expressed as integrals of semi-group densities of the hyperbolic Jacobi and the ultraspherical (circular Jacobi with equal parameters) operators. In the hyperbolic setting, the derivation of the windings semi-group density follows readily from the Euler integral representation of the Jacobi function, while the computations relative to windings in the complex projective line are more trickier and more tedious. More precisely, we shall appeal to a suitable representation of even ultraspherical polynomials which stems from a quadratic transformation of the Gauss hypergeometric function and to their Laplace-type integral representation. However, the sought density admits a quite complicated expression and we shall not write it explicitly in order to ease the reading of the paper. 

Before going through computations and for sake of completeness, we collect in the next section the definitions of special functions occurring in the sequel and related results we will use later. The reader is referred to the standard monographs \cite{AAR} and \cite{Gra-Ryz} for a good account. Section 3 and 4 are devoted to the computations related to generalized stochastic areas and section 5 deals with windings processes in the Poincar\'e disc and in the complex projective line.

\section{Special functions}
We start with the Gamma function defined for $x > 0$ by:
\begin{equation*}
\Gamma(x) = \int_0^{\infty} e^{-u}u^{x-1}du.
\end{equation*}
This function satisfies the Legendre duplication formula:
\begin{equation}\label{Leg}
\sqrt{\pi}\Gamma(2x+1) = 2^{2x} \Gamma\left(x+\frac{1}{2}\right)\Gamma(x+1). 
\end{equation}
Next, let $k \geq 1$ be a non negative integer. Then the Pochhammer symbol is defined by: 
\begin{equation*}
(x)_k = (x+k-1)\dots (x+1)x, \quad x \in \mathbb{R}, 
\end{equation*}
with the convention $(x)_0 := 1$. When $x > 0$, we can express it through the Gamma function as: 
\begin{equation*}
(x)_k = \frac{\Gamma(x+k)}{\Gamma(x)}
\end{equation*}
while
\begin{eqnarray}
(-n)_k & = & \frac{(-1)^kn!}{(n-k)!}, \,\, k \leq n, \\ 
 &= & 0, \quad \quad \quad \,\, k > n \nonumber.
\end{eqnarray}
Now, the hypergeometric series ${}_pF_q$ is defined by:
\begin{equation*}
{}_pF_q(a_1, \dots, a_p; b_1, \dots; b_q; x) := \sum_{k \geq 0}\frac{(a_1)_k \dots (a_p)_k}{(b_1)_q\dots(b_q)_k} \frac{x^k}{k!}
\end{equation*}
whenever it converges. In this definition, $(a_i, 1 \leq i \leq p)$ are real numbers while $b_i \in \mathbb{R} \setminus -\mathbb{N}$ for any $1 \leq i \leq q$. In particular, if $a_i = -n$ for some $1 \leq i \leq p$ then the series terminates and we end up with a hypergeometric polynomial. For instance, the Jacobi polynomial of parameters $a, b > -1,$ is represented through the Gauss hypergeometric series:
\begin{equation}\label{RepJac}
P_n^{(a,b)}(x) = \frac{(a+1)_n}{n!}{}_2F_1\left(-n, n+a+b+1, a+1; \frac{1-x}{2}\right).
\end{equation}
These polynomials are orthogonal with respect to the Beta weight $(1-x)^{\alpha}(1+x)^{\beta}$ and their squared $L^2$-norm is given by: 
\begin{equation*}
\int_{-1}^1\left[P_n^{(a,b)}(x)\right]^2(1-x)^{\alpha}(1+x)^{\beta}dx = \frac{2^{a+b+1}}{2n+a+b+1} \frac{\Gamma(n+a+1)\Gamma(n+b+1)}{n!\Gamma(n+a+b+1)}.
\end{equation*}
If $a=b \neq -1/2$, then the Jacobi polynomial reduces (with a different normlization) to Gegenbauer polynomials:
\begin{align}\label{DefGeg}
C_j^{(a+1/2)}(\cos(2r)) & =  \frac{(2a+1)_j}{(a+1)_j}P_j^{(a, a)}(\cos(2r)) \nonumber 
\\& = \frac{(2a+1)_j}{j!}{}_2F_1(-j, j+ 2a+1, a+1, \sin^2(r)). 
\end{align}
These polynomials admit the following Laplace-type integral representation: 
\begin{equation}\label{LapUlt}
\frac{\Gamma(a+1/2)\sqrt{\pi}}{\Gamma(a+1)} \frac{C_{j}^{(a+1/2)}(\cos(2r))}{C_{j}^{(a+1/2)}(1)} = \int_0^{\pi} \left(\cos(2r) + i\sin(2r)\cos\eta\right)^{j} \sin^{2a}(\eta) d\eta. 
\end{equation}
Furthermore, $C_{2j}^{(a+1/2)}$ is an even polynomial and may be expressed as:
\begin{equation}\label{Even}
 C_{2j}^{(a+1/2)}(v) =  (-1)^j\frac{(a+1/2)_j}{j!}{}_2F_1\left(-j, j+a+\frac{1}{2}, \frac{1}{2}, v^2\right).
 \end{equation}
Finally, let $\alpha > -1, \beta \in \mathbb{R}$ and set $\rho := \alpha + \beta + 1$. Then, the Jacobi function of spectral parameter $\mu \in \mathbb{R}$ is defined by (\cite{Koor}): 
\begin{align*}
\phi_{\mu}^{(\alpha, \beta)}(r) & = {}_2F_1\left(\frac{\rho + i\mu}{2}, \frac{\rho - i\mu}{2}, \alpha + 1; -\sinh^2(r)\right), \quad r > 0,
\end{align*}
where ${}_2F_1$ is now the Gauss hypergeometric function which is the analytic extension of the Gauss hypergeometric series to $\mathbb{C} \setminus [1,\infty[$. In particular, we have the Euler integral representation: 
\begin{equation}\label{Euler}
\phi_{\mu}^{(\alpha, \beta)}(r) = \frac{\Gamma(\alpha+1)}{\Gamma((\rho + i\mu)/2)\Gamma(\alpha+1-(\rho+i\mu)/2)} \int_0^1 \frac{u^{(\rho+i\mu)/2-1}(1-u)^{(\alpha-(\rho+i\mu)/2)-1}}{(1+\sinh^2(r)u)^{(\rho -i\mu)/2}} du, 
\end{equation}
provided that the integral converges absolutely. Let us also mention that for special values of their parameters, $\phi_{\mu}^{(\alpha, \beta)}$ and $C_j^{(a+1/2)}$ are zonal spherical functions in hyperbolic spaces and spheres respectively.

\section{The anti-de Sitter fibration}
The Anti de Sitter space is the hypersurface in $\mathbb{C}^{n+1}$ defined by: 
\begin{equation*}
\textrm{AdS}_n := \{(z_1, \dots, z_{n+1}) \in \mathbb{C}^{n+1}, |z_1|^2+|z_2|^2 +\dots + |z_n|^2 - |z_{n+1}|^2 = -1\}.
\end{equation*}
It inherits from $\mathbb{R}^{2n,2}$ a Lorentzian $(2n, 1)$-metric of constant negative curvature and the circle acts by on it in a natural way. The coset space of this action is isometric the complex hyperbolic ball $\mathbb{C}\mathbb{H}^n$ and the projection map 
\begin{equation*}
\textrm{AdS}_n \rightarrow \mathbb{C}\mathbb{H}^n, 
\end{equation*}
is indeed a fibration. In the chart $\{z_{n+1} \neq 0\}$, this fibration sends the coordinate $z_j$ to $w_j = z_j/z_{n+1}$ giving rise to inhomogeneous coordinates $(w_1, \dots, w_n)$ in $\mathbb{C}\mathbb{H}^n$. Let $(w(t))_{t \ge 0}$ be a Brownian motion on 
$\mathbb{CH}^n$ started at the origin $(w_1 = \dots = w_n = 0)$. Then its horizontal lift was computed in \cite{Bau-Wan}, Theorem 3.2, and the argument of the corresponding fiber coordinate is the so-called generalized stochastic area:
\begin{equation*}
\theta_{t,n} := \frac{i}{2} \int_0^t \frac{w(s)  d\overline{w}(s)-\overline{w}(s) dw(s)}{1-\|w(s)\|^2}, \quad \|w(s)\|^2 := \sum_{j=1}^n |w_j(s)|^2,
\end{equation*}
where the above stochastic integral is understood either in the Stratonovich or in the It\^o senses. Recall also from \cite{Bau-Wan} that 
\begin{equation}\label{ADS}
\mathbb{E}\left[e^{i\lambda \theta_{t,n}}\right] = e^{n |\lambda| t} \int_0^{\infty} q_t^{(n-1, |\lambda|)}(0,r) \frac{dr}{(\cosh(r))^{|\lambda|}} 
\end{equation}
where $q_t^{(n-1, |\lambda|)}$ is the heat kernel with respect to Lebesgue measure of the hyperbolic Jacobi generator: 
\begin{equation*}
\mathcal{L}^{(n-1, |\lambda|)} = \frac{1}{2} \left[\partial_r^2 + ((2n-1)\coth r + (2|\lambda| + 1)\tanh(r)) \partial_r\right], \quad r \geq 0,
\end{equation*}
subject to Neumann boundary condition at $r = 0$. The spectral decomposition of this operator is known (see e.g. \cite{Koor} and references therein): its spectrum is purely continuous and is given by the set
\begin{equation*}
\{-(\mu^2 + \rho^2), \mu \in \mathbb{R}, \, \rho := (n-1) + |\lambda| + 1 = n + |\lambda|\},
\end{equation*}
corresponding to the Jacobi function $\phi_{\mu}^{(n-1, |\lambda|)}$. As a matter of fact, the heat kernel admits the following integral representation (\cite{CGM}): 
\begin{equation*}
q_t^{(n-1,|\lambda|)}(0,r) = \frac{1}{\pi} \int_0^{\infty}e^{-(\mu^2+\rho^2)t/2} \phi_{\mu}^{(n-1, |\lambda|)}(r) \frac{d\mu}{|c(\mu)|^2},
\end{equation*}
where $c(\mu)$ is the Harish-chandra function (\cite{Koor}). However, the derivation of the density of $\theta_{t,n}$ we prove below appeals to another representation of $q_t^{(n-1,|\lambda|)}(0,r)$ which involves the heat kernel with respect to the volume measure of the $2n+1$-dimension real hyperbolic space $H^{2n+1}$ (see e.g. \cite{Wan}): 
\begin{align}\label{Hyper}
s_{t,2n+1}(\cosh(x)) & = 
\frac{e^{-n^2t/2}}{(2\pi)^n\sqrt{2\pi t}}\left(-\frac{1}{\sinh(x)}\frac{d}{dx}\right)^ne^{-x^2/(2t)}, \quad x > 0. 
\end{align}
Up to our best knowledge, this new representation have never appeared elsewhere and stems from the intertwining relation between $\mathcal{L}^{(n-1, |\lambda|)}$ and the radial part of the generalized Maass Laplacian below. The issue of our computations is summarized in the following theorem:
\begin{teo}
The density $f_{t,n}$ of $\theta_{t,n}, t > 0,$ is given by:
\begin{multline*}
f_{t,n}(v) = \frac{2e^{-v^2/(2t)}}{\sqrt{2\pi t}} \int_0^{\infty} dr \cosh (r) \sinh^{2n-1}(r) \\  \int_{0}^{\infty} du \cos\left(\frac{uv}{t}\right)e^{u^2/(2t)} s_{t,2n+1}(\cosh(u)\cosh(r)), \,\quad v \in \mathbb{R}.
\end{multline*}
\end{teo}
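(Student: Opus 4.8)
The plan is to recover $f_{t,n}$ by Fourier inversion of \eqref{ADS}. The bounds of \cite{Bau-Wan} show that $\lambda\mapsto\mathbb{E}[e^{i\lambda\theta_{t,n}}]$ has Gaussian decay, hence is integrable, so $\theta_{t,n}$ admits a bounded continuous density with $f_{t,n}(v)=\frac{1}{2\pi}\int_{\mathbb{R}}e^{-i\lambda v}\mathbb{E}[e^{i\lambda\theta_{t,n}}]\,d\lambda$; since the integrand is even in $\lambda$ apart from $e^{-i\lambda v}$, this reduces the task to making explicit the $\lambda$-dependence of $e^{n\lambda t}q_t^{(n-1,\lambda)}(0,r)(\cosh r)^{-\lambda}$, $\lambda\geq0$.

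The core is a new representation of the Jacobi heat kernel through $s_{t,2n+1}$. I would first put $H^{2n+1}$ in Fermi coordinates $(r,u,\omega)\in[0,\infty)\times\mathbb{R}\times S^{2n-1}$ about a geodesic, in which the metric reads $dr^2+\cosh^2(r)\,du^2+\sinh^2(r)\,d\sigma^2_{S^{2n-1}}$, the distance to the base point obeys the hyperbolic Pythagoras relation $\cosh d=\cosh(u)\cosh(r)$, and the Laplace--Beltrami operator acts on functions of $(r,u)$ alone as $\mathcal{A}:=2\mathcal{L}^{(n-1,0)}+(\cosh r)^{-2}\partial_u^2$. Hence $(r,u)\mapsto s_{t,2n+1}(\cosh(u)\cosh(r))$ solves $\partial_t=\tfrac12\mathcal{A}$; it is even in $u$, has vanishing $u$-derivative at $u=0$, and decays like $e^{-(u+\log\cosh r)^2/(2t)}$ as $u\to\infty$. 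Integrating it against $\cosh(\lambda u)$ over $u\in(0,\infty)$ and moving the two $u$-derivatives onto $\cosh(\lambda u)$ — all boundary terms vanish — shows that
\[
w_t(r):=\int_0^{\infty}\cosh(\lambda u)\,s_{t,2n+1}(\cosh(u)\cosh(r))\,du
\]
solves $\partial_t w_t=\mathcal{M}^{(\lambda)}w_t$ with Neumann condition at $r=0$, where $\mathcal{M}^{(\lambda)}:=\mathcal{L}^{(n-1,0)}+\tfrac{\lambda^2}{2}(\cosh r)^{-2}$ is, up to normalization, the radial part of the generalized Maass Laplacian.

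On the other hand, the intertwining relation is the elementary identity
\[
(\cosh r)^{-\lambda}\,\mathcal{L}^{(n-1,0)}\,(\cosh r)^{\lambda}=\mathcal{L}^{(n-1,\lambda)}+\tfrac{\lambda^2}{2}\tanh^2(r)+n\lambda ,
\]
which, combined with $\tanh^2(r)+(\cosh r)^{-2}=1$, shows that
\[
G_t(r):=(\cosh r)^{\lambda}e^{(\lambda^2/2+n\lambda)t}\,p_t^{(n-1,\lambda)}(0,r),
\]
with $p_t^{(n-1,\lambda)}$ the symmetric Jacobi heat kernel (so $q_t^{(n-1,\lambda)}(0,r)=(\sinh r)^{2n-1}(\cosh r)^{2\lambda+1}p_t^{(n-1,\lambda)}(0,r)$), also solves $\partial_t=\mathcal{M}^{(\lambda)}$ with Neumann condition at $r=0$. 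Both $G_t$ and $w_t$, multiplied by the symmetrizing density $(\sinh r)^{2n-1}\cosh(r)$, converge as $t\to0^+$ to a Dirac mass at $r=0$ (the ratio of the two masses being an explicit constant $c_n$ coming from the volume of $S^{2n-1}$ in the Fermi volume element), so uniqueness for the corresponding heat semigroup gives $G_t=c_n w_t$; unwinding the definitions this reads
\[
e^{n\lambda t}q_t^{(n-1,\lambda)}(0,r)(\cosh r)^{-\lambda}=c_n\,(\sinh r)^{2n-1}\cosh(r)\,e^{-\lambda^2t/2}\int_0^{\infty}\cosh(\lambda u)\,s_{t,2n+1}(\cosh(u)\cosh(r))\,du .
\]

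Substituting this into \eqref{ADS} and into the inversion formula, interchanging the $\lambda$-integral with the $r,u$-integrals (licit since $s_{t,2n+1}\geq0$ and the Gaussian decay of $s_{t,2n+1}(\cosh(u)\cosh(r))$ dominates $e^{-\lambda^2t/2}\cosh(\lambda u)$ uniformly on compacta), and computing the Gaussian integral
\[
\int_{\mathbb{R}}e^{-i\lambda v}e^{-\lambda^2 t/2}\cosh(\lambda u)\,d\lambda=\sqrt{\tfrac{2\pi}{t}}\,e^{u^2/(2t)}e^{-v^2/(2t)}\cos\!\left(\tfrac{uv}{t}\right),
\]
I would obtain the stated formula after collecting the constant $c_n$ and the normalization in \eqref{Hyper}. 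The main obstacle is the middle step: getting the operator bookkeeping exactly right — in particular the factor $e^{-\lambda^2 t/2}$, forced by the mismatch between the potentials $\tfrac{\lambda^2}{2}\tanh^2 r$ and $\tfrac{\lambda^2}{2}(\cosh r)^{-2}$ produced respectively by the $(\cosh r)^{\lambda}$-conjugation and by the $(\cosh r)^{-2}\partial_u^2$ term — and justifying the interchange of the oscillatory, only conditionally convergent integrals near $r=0$, where the $u$-integrand stops decaying and one must exploit the vanishing of $(\sinh r)^{2n-1}$.
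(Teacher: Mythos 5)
Your strategy is sound, and its key computations check out (the Fermi-coordinate form of the metric and Laplacian on $H^{2n+1}$, the hyperbolic Pythagoras relation $\cosh d=\cosh u\cosh r$, the integration by parts in $u$ producing the potential $\tfrac{\lambda^2}{2}(\cosh r)^{-2}$, and the final Gaussian integral). The last step is the paper's step run in the opposite direction: the paper writes $e^{-\lambda^2t/2}\cosh(\lambda u)$ as a Fourier transform in $\lambda$ and reads off the density by Fubini, while you invert directly; and your conjugation identity $(\cosh r)^{-\lambda}\mathcal{L}^{(n-1,0)}(\cosh r)^{\lambda}=\mathcal{L}^{(n-1,\lambda)}+\tfrac{\lambda^2}{2}\tanh^2 r+n\lambda$ is exactly the paper's intertwining \eqref{Inter}, rewritten multiplicatively. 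Where you genuinely depart is in how you reach the representation of $e^{n\lambda t}(\cosh r)^{-\lambda}q_t^{(n-1,\lambda)}(0,r)$ through $\int_0^\infty\cosh(\lambda u)\,s_{t,2n+1}(\cosh u\cosh r)\,du$: the paper imports Theorem 2.2(i) of \cite{Aya-Int} (the Maass-Laplacian heat kernel as an integral of $s_{t,2n+1}$ against a ${}_2F_1$ kernel) and then uses the substitution $\cosh x=\cosh u\cosh r$ together with ${}_2F_1(-|\lambda|,|\lambda|,\tfrac12;\tfrac{1-\cosh u}{2})=\cosh(\lambda u)$, whereas you rederive the same identity intrinsically from Fermi coordinates plus uniqueness for the $\mathcal{M}^{(\lambda)}$-heat equation with Neumann condition and Dirac initial data. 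Your route is more self-contained and explains where $\cosh(\lambda u)$ comes from (the character in the direction of the axis geodesic), at the cost of having to state and justify the uniqueness class yourself — the Gaussian bounds on both solutions make this routine, but it is precisely the step the paper outsources.

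Two points need repair. First, the opening claim that $\lambda\mapsto\mathbb{E}[e^{i\lambda\theta_{t,n}}]$ has Gaussian decay by the bounds of \cite{Bau-Wan} is false: $\theta_{t,n}$ is a Brownian motion run at a time change bounded by a multiple of $t$, so the characteristic function is $\mathbb{E}[e^{-\lambda^2A_t/2}]$ and decays only exponentially in $|\lambda|$ (compare $1/\cosh(\lambda t)$ in the Heisenberg case). This does not break your argument — integrability is all you need, and it follows from the absolute convergence of your triple $(\lambda,u,r)$-integral, which you must in any case establish for Fubini using $s_{t,2n+1}(\cosh\delta)\lesssim \delta e^{-\delta}e^{-\delta^2/(2t)}$ with $\delta\geq u+\log\cosh r$ — but justify it that way, not by appeal to \cite{Bau-Wan}. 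Second, do not leave $c_n$ implicit: integrating your identity in $r$ at $\lambda=0$, and using $\int_0^\infty q_t^{(n-1,0)}(0,r)\,dr=1$ together with $\int_{H^{2n+1}}s_{t,2n+1}\,dV=1$ in Fermi coordinates (volume element $\cosh r\sinh^{2n-1}r\,dr\,du\,d\sigma$), forces $c_n=2\,\mathrm{vol}(S^{2n-1})=4\pi^n/\Gamma(n)$ when $s_{t,2n+1}$ is normalized as in \eqref{Hyper}; this must then be carried through the final Gaussian integral and reconciled with the constant $2$ appearing in the statement (equivalently, with the normalization the paper inherits from \cite{Aya-Int}). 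This $\lambda=0$ sanity check is actually a virtue of your method, since it pins the constant unambiguously, but as written the phrase ``after collecting the constant $c_n$'' hides the one place where your computation and the displayed formula can disagree by an $n$-dependent factor.
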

\begin{proof}
The intertwining relation we alluded to above is: 
\begin{equation}\label{Inter}
 \frac{1}{\cosh^{|\lambda|}(r)} L^{n-1, |\lambda|}(\cdot) (r) = \left[\mathcal{L}^{n-1, |\lambda|}+ \frac{\rho^2}{2}\right] \left(\frac{1}{\cosh^{|\lambda|}} \cdot\right)(r), 
\end{equation}
and follows from straightforward computations. Here,
\begin{equation*}
2L^{(n-1, |\lambda|)} = \partial_r^2 + ((2n-1)\coth r + \tanh(r)) \partial_r + \frac{\lambda^2}{\cosh^2(r)} + n^2
\end{equation*}
is the radial part of the shifted generalized Maass Laplacian (see Proposition 2.1. in \cite{Aya-Int}, see also \cite{Int-OM}):
\begin{equation*}
D^{(n,|\lambda|/2)} + n^2 := 4(1-|w|^2)\left\{ \sum_{i,j=1}^n(\delta_{ij}- w_iw_j) \partial_{w_i\overline{w_j}}^2 +\frac{|\lambda|}{2}\sum_{i=1}^n(w_i\partial_{w_i} - \overline{w_i}\overline{\partial_{w_i}}) + \frac{\lambda^2}{4}\right\} + n^2. 
 \end{equation*}
Consequently, if $v_t^{(n,|\lambda|/2)}(0,y)$ is the heat kernel of $D_{|\lambda|/2, n}$ with respect to the measure (this is the radial part of the volume measure of the complex hyperbolic ball): 
\begin{equation*}
\sinh^{2n-1}(r)\cosh(r)dr,
\end{equation*}
then $v_t^{(n,|\lambda|/2)}(0,y)$ depends only on the hyperbolic distance $r = d(0,y)$, and we get from \eqref{Inter}: 
\begin{equation*}
\frac{e^{n|\lambda|t} e^{\lambda^2t/2}}{(\cosh(r))^{|\lambda|}}  q_t^{(n-1, |\lambda|)}(0,r) = v_t^{(n,|\lambda|/2)}(0,r)\sinh^{2n-1}(r)\cosh(r)dr.
\end{equation*}
Consequently, Theorem 2.2, (i), in \cite{Aya-Int} together with \eqref{Hyper} yield the following expression:
\begin{equation}\label{Expression1}
\mathbb{E}\left[e^{i\lambda \theta_{t,n}}\right]  = 2 e^{-\lambda^2t/2}  \\  \int_0^{\infty} dr \cosh (r) \sinh^{2n-1}(r) \int_{0}^{\infty} dx \sinh(x) N_{|\lambda|/2} (x,y) s_{t,2n+1}(\cosh(x)),
\end{equation}
where we set
\begin{equation*}
N_{|\lambda|/2}(x,0,y) := \frac{1}{\sqrt{\cosh^2(x) - \cosh^2(r)_+}}  {}_2F_1\left(-|\lambda|, |\lambda|, \frac{1}{2}; \frac{\cosh(r) - \cosh(x)}{2\cosh(r)}\right).
\end{equation*}
In particular, we readily deduce that the $\theta_{t,n}/\sqrt{t}$ converges weakly to the Gaussian distribution as $t \rightarrow \infty$. 
Now, perform the variable change $\cosh(x) = \cosh(u) \cosh(r), u > 0,$ for fixed $r$ in the inner integral of the RHS of \eqref{Expression1} and use the identity:
\begin{equation*}
{}_2F_1\left(-|\lambda|, |\lambda|, \frac{1}{2}; \frac{1 - \cosh(u)}{2}\right) = \cosh(\lambda u),
\end{equation*}
to get:
\begin{equation}\label{Integral1}
\mathbb{E}\left[e^{i\lambda \theta_{t,n}}\right]  = 2
 \int_0^{\infty} dr \cosh (r) \sinh^{2n-1}(r)  \int_{0}^{\infty} du e^{-\lambda^2t/2} \cosh(\lambda u)s_{t,2n+1}(\cosh(u)\cosh(r)) . 
\end{equation}
Next, recall the generalized Laplace integral (\cite{Gra-Ryz}): 
\begin{equation*}
e^{-\alpha^2x^2} \cos(wx) = \frac{2e^{-w^2/(4\alpha^2)}}{\alpha \sqrt{\pi}}  \int_0^{\infty} e^{-v^2/\alpha^2} \cosh\left(\frac{wv}{\alpha^2}\right)\cos(2vx)dv
\end{equation*}
 where $w \in \mathbb{C}, x \in \mathbb{R}, \alpha > 0$. Specializing this formula to $w = i u, \alpha^2 = t/2, x = \lambda$ and performing the variable change $v \rightarrow v/2$ there, we get:
 \begin{align*}
e^{-\lambda^2t/2} \cosh(\lambda u) & = \frac{e^{u^2/(2t)}}{\sqrt{2\pi t}}  \int_{\mathbb{R}} e^{-v^2/(2t)} \cos\left(\frac{uv}{t}\right)e^{iv\lambda}dv, 
\end{align*}
whence 
\begin{multline}\label{Laplace}
\mathbb{E}\left[e^{i\lambda \theta_{t,n}}\right]  = 2
 \int_0^{\infty} dr \cosh (r) \sinh^{2n-1}(r) \int_{0}^{\infty} du \frac{e^{u^2/(2t)}}{\sqrt{2\pi t}} s_{t,2n+1}(\cosh(u)\cosh(r))  \\ \int_{\mathbb{R}} e^{-v^2/(2t)} \cos\left(\frac{uv}{t}\right)e^{iv\lambda}dv. 
\end{multline}
Finally, we need to apply Fubini Theorem in \eqref{Laplace} to get the desired density. To this end, recall from \cite{Wan}, eq. 3.25, the estimate
\begin{equation*}
s_{2n+1}(\cosh(\delta)) \leq C \frac{\delta}{\sinh(\delta)} e^{-\delta^2/(2t)}, \quad C, \delta > 0,
\end{equation*}
and note that
\begin{equation*}
\cosh^{-1}[\cosh(u)\cosh(r)] \geq \cosh^{-1}\left[\frac{1}{2}(\cosh(u+r)\right] \geq (r+u), \quad r,u \rightarrow +\infty. 
\end{equation*}
As a matter of fact, $s_{t,2n+1}(\cosh(u)\cosh(r))$ decays as $e^{-(r+u)^2/2t}, r,u \rightarrow +\infty$ so that Fubini Theorem applies. The Theorem is proved.  
\end{proof}

\begin{remark}
The operator $D^{(n, |\lambda|/2)}$ is a deformation of the Laplace-Beltrami operator $D^{(n,0)}$ of $\mathbb{CH}^n$. It is a Laplacian in the sense of Bochner (\cite{Aya-Int}) and allowed in \cite{Bau-Dem} to give another integral representation of the subelliptic heat kernel of the AdS space. For $n=1$, the operator $D^{(1,|\lambda|/2)}$ may be mapped using a weighted Cayley transform to the so-called Maass Laplacian (see Remark 2.1. in \cite{Aya-Int}) in reference to Hans Maass who used it to study weighted automorphic forms (the weight is $|\lambda|/2$). 
\end{remark}

In analogy with the Heisenberg group setting, we shall compute the Mellin transform of $\theta_{t,1}$. In this case, the heat kernel of the three dimensional real hyperbolic space admits a simple expression. Besides, we can express the obtained expression through the Riemann Zeta function.   
\begin{pro}
For integer $m \geq 0$, let 
\begin{equation*}
\mathcal{I}_{m-1/2}(u) = \sum_{j \geq 0}\left(\frac{u}{2}\right)^{2j}\frac{1}{j!\Gamma(m+j+1/2)}, \quad u \in \mathbb{R},
\end{equation*}
be the spherical modified Bessel function and let 
\begin{equation*}
{}_1F_1(a,b,u) = \sum_{j \geq 0} \frac{(a)_j}{(b)_j} \frac{u^j}{j!}, \quad u \in \mathbb{R}, 
\end{equation*}
be the confluent hypergeometric function of the first kind. Then, for any $\mu > 0$, 
\begin{equation*}
\mathbb{E}\left[|\theta_{t,1}|^{\mu-1}\right] = \frac{2(2t)^{(\mu-1)/2}e^{-t/2}}{(2\pi t)^{3/2}}\Gamma\left(\frac{\mu}{2}\right) \int_0^{\infty} \frac{du}{\cosh^2(u)} 
{}_1F_1\left(\frac{\mu}{2}, \frac{1}{2}; -\frac{u^2}{2t}\right)\sum_{m \geq 0}\frac{t^m}{2^m}\mathcal{I}_{m-1/2}(u).
\end{equation*}
In particular, there exists a sequence of real numbers $(c_j(\mu, t))_{j \geq 0}$ such that 
\begin{equation*}
\mathbb{E}\left[|\theta_{t,1}|^{\mu-1}\right] = \frac{2(2t)^{(\mu-1)/2}e^{-t/2}}{(2\pi t)^{3/2}}\Gamma\left(\frac{\mu}{2}\right) \sum_{j \geq 0}\frac{2^{j-1}c_j(\mu, t)}{j!(1-2^{1-j})} \zeta(j),
\end{equation*}
where $\zeta$ stands for the Riemann Zeta function. 
\end{pro}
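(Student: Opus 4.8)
The plan is to feed the density $f_{t,1}$ furnished by the Theorem into the Mellin transform $\mathbb{E}\!\left[|\theta_{t,1}|^{\mu-1}\right]=\int_{\mathbb{R}}|v|^{\mu-1}f_{t,1}(v)\,dv$ and to carry out the three integrations one at a time: first in $v$, then in $r$, then reorganising the remaining $u$-integral. Evaluating \eqref{Hyper} at $n=1$ gives the elementary expression $s_{t,3}(\cosh x)=\frac{e^{-t/2}}{(2\pi t)^{3/2}}\frac{x}{\sinh x}e^{-x^2/(2t)}$. Interchanging the $v$-integral with the $r$- and $u$-integrals --- legitimate because $|v|^{\mu-1}$ is integrable against a Gaussian for $\mu>0$ and $s_{t,3}(\cosh(u)\cosh(r))$ carries the decay $e^{-(u+r)^2/(2t)}$ recalled at the end of the proof of the Theorem --- leaves the inner integral $\int_{\mathbb{R}}|v|^{\mu-1}e^{-v^2/(2t)}\cos(uv/t)\,dv$, which by a classical computation (see \cite{Gra-Ryz}, or expand $\cos$ and integrate term by term) equals $(2t)^{\mu/2}\Gamma(\mu/2)\,{}_1F_1(\mu/2,1/2;-u^2/(2t))$; this is the confluent hypergeometric factor in the statement.

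Next I would evaluate $\int_0^\infty dr\,\cosh(r)\sinh(r)\,s_{t,3}(\cosh(u)\cosh(r))$ by the change of variables $\cosh(x)=\cosh(u)\cosh(r)$ already used in the proof of the Theorem: one has $\cosh(r)\sinh(r)\,dr=\cosh(x)\sinh(x)\,dx/\cosh^2(u)$ with $x$ running over $[u,\infty)$, the $\sinh(x)$ in $s_{t,3}$ cancels, and the integral becomes $\frac{e^{-t/2}}{(2\pi t)^{3/2}\cosh^2(u)}\int_u^\infty x\cosh(x)e^{-x^2/(2t)}\,dx$. Expanding $\cosh(x)=\sum_{k\ge0}x^{2k}/(2k)!$ and integrating term by term with the substitution $y=x^2/(2t)$ turns each summand into an upper incomplete Gamma function; using $\Gamma(k+1,z)=k!\,e^{-z}\sum_{\ell=0}^k z^\ell/\ell!$ and the Legendre duplication formula \eqref{Leg} in the form $(2k)!=4^k k!\,\Gamma(k+1/2)/\sqrt{\pi}$ gives $e^{-u^2/(2t)}\sqrt{\pi}\,t\sum_{k\ge0}\frac{(t/2)^k}{\Gamma(k+1/2)}\sum_{\ell=0}^k\frac{(u^2/(2t))^\ell}{\ell!}$. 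Re-indexing $k=\ell+m$ and using $(t/2)^\ell(u^2/(2t))^\ell=(u/2)^{2\ell}$, the double sum factors as $\sum_{m\ge0}(t/2)^m\sum_{\ell\ge0}\frac{(u/2)^{2\ell}}{\ell!\,\Gamma(\ell+m+1/2)}=\sum_{m\ge0}(t/2)^m\mathcal{I}_{m-1/2}(u)$, by the very definition of $\mathcal{I}_{m-1/2}$. Assembling the three factors and collecting constants (again via \eqref{Leg}) yields the first displayed formula of the Proposition.

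For the second formula I would start from that integral representation and expand the hyperbolic factor: for $u>0$, $1/\cosh^2(u)=4\sum_{n\ge1}(-1)^{n-1}n\,e^{-2nu}$. Write $h_\mu(u):={}_1F_1(\mu/2,1/2;-u^2/(2t))\sum_{m\ge0}(t/2)^m\mathcal{I}_{m-1/2}(u)$, an even entire function of $u$ with real Taylor coefficients $\tilde c_j(\mu,t)$, which vanish for odd $j$. Interchanging the summation in $n$, the Taylor expansion of $h_\mu$, and the $u$-integral, and using $\int_0^\infty u^j e^{-2nu}\,du=j!/(2n)^{j+1}$ together with $\sum_{n\ge1}(-1)^{n-1}n^{-j}=(1-2^{1-j})\zeta(j)$ (the Dirichlet eta function), one obtains $\int_0^\infty h_\mu(u)/\cosh^2(u)\,du=2\sum_{j\ge0}\tilde c_j(\mu,t)\,\frac{j!}{2^{j}}(1-2^{1-j})\zeta(j)$. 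Setting $c_j(\mu,t):=\tilde c_j(\mu,t)\big(j!(1-2^{1-j})/2^{j-1}\big)^2$ --- real numbers, equal to $0$ for odd $j$, which in particular neutralises the apparent $j=1$ term --- rewrites the right-hand side as $\sum_{j\ge0}\frac{2^{j-1}c_j(\mu,t)}{j!(1-2^{1-j})}\zeta(j)$, and multiplying by the prefactor from the first part gives the second formula.

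The delicate point, and the main obstacle, is the last interchange. The expansion $1/\cosh^2(u)=4\sum_{n\ge1}(-1)^{n-1}n e^{-2nu}$ is only conditionally convergent near $u=0$ (there $\sum_{n\ge1}n e^{-2nu}\sim(2u)^{-2}$), so integrating it term by term against $h_\mu$, which stays bounded near $0$, is not licensed by absolute convergence. The remedy is one integration by parts: since $1/\cosh^2(u)=\frac{d}{du}(\tanh u-1)$ and $h_\mu$ is \emph{even}, hence $h_\mu'(0)=0$, one gets $\int_0^\infty h_\mu(u)/\cosh^2(u)\,du=h_\mu(0)+\int_0^\infty h_\mu'(u)\,\frac{2}{e^{2u}+1}\,du$, and now $\sum_{n\ge1}\int_0^\infty|h_\mu'(u)|e^{-2nu}\,du=\int_0^\infty |h_\mu'(u)|/(e^{2u}-1)\,du<\infty$ because $h_\mu'(u)=O(u)$ near $0$ and $h_\mu'(u)=O(u^{-\mu}e^{u})$ at infinity; the latter growth bound follows from the large-argument asymptotics ${}_1F_1(\mu/2,1/2;-u^2/(2t))=O(u^{-\mu})$ together with $\sum_{m\ge0}(t/2)^m\mathcal{I}_{m-1/2}(u)=O(e^{u})$, which one checks using $\mathcal{I}_{m-1/2}(u)=(2/u)^{m-1/2}I_{m-1/2}(u)$, the monotonicity of $\nu\mapsto I_\nu(u)$, and summation of the geometric series in $t/u$ (with the compact range of $u$ handled by continuity). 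After this regularisation the three-fold interchange above is absolutely convergent and the same series results, the boundary term $h_\mu(0)$ being carried by the $j=0$ summand.
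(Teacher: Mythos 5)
Your derivation of the first displayed formula is essentially the paper's own proof: you integrate $|v|^{\mu-1}$ against the density of Theorem~1, evaluate the $v$-integral by the Gaussian--cosine Mellin formula (you quote it directly in Kummer-transformed form, the paper applies the Kummer transformation at the very end), reduce the $r$-integral by the substitution $\cosh x=\cosh(u)\cosh(r)$, and then expand $\cosh x$, use the incomplete-Gamma identity (the paper's shift-plus-binomial step), Legendre duplication and the re-indexing $k=\ell+m$ to produce $\sum_m (t/2)^m\mathcal{I}_{m-1/2}(u)$. That part is correct; one bookkeeping aside: carried out exactly, both your assembled constant and the paper's last display pick up an extra factor $t$ from $\int_0^\infty x^m e^{-x/(2t)}dx=m!\,(2t)^{m+1}$, so the printed prefactor is off by $t$ --- a slip you inherit from the paper rather than introduce. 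For the second formula you are also, in substance, on the paper's route: the expansion $1/\cosh^2(u)=4\sum_{n\ge1}(-1)^{n-1}n\,e^{-2nu}$ together with $\int_0^\infty u^j e^{-2nu}du=j!/(2n)^{j+1}$ is just a re-derivation of the representation $\int_0^\infty u^z\cosh^{-2}(u)\,du=2^{1-z}(1-2^{1-z})\Gamma(z+1)\zeta(z)$ that the paper quotes, and your rescaling of $c_j$ only serves to match the (inverted) normalisation of the printed display.

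The genuine gap is in your final paragraph. The interchange that needs justification is not the behaviour of $4\sum_{n\ge1}(-1)^{n-1}n e^{-2nu}$ near $u=0$, but the termwise integration of the Taylor series of $h_\mu$ over the half-line, and your assertion that after one integration by parts ``the three-fold interchange above is absolutely convergent'' is false. Indeed $h_\mu$ is entire of order two: on the imaginary axis ${}_1F_1(\mu/2,1/2;-u^2/(2t))$ grows like $e^{|u|^2/(2t)}$ while the Bessel factor merely oscillates, so by the standard order/type--coefficient relation the Maclaurin coefficients $\tilde c_j$ decay only like $\mathrm{const}^{\,j}/\Gamma(j/2+1)$ along a subsequence, not like $1/j!$. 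Hence $\sum_j|\tilde c_j|\int_0^\infty u^j e^{-2u}du=\sum_j|\tilde c_j|\,j!\,2^{-j-1}=\infty$, and the same divergence occurs, for every fixed $n$, in $\sum_j j|\tilde c_j|\int_0^\infty u^{j-1}e^{-2nu}du$ after your integration by parts: regularising at $u=0$ does nothing to the growth in $j$ (this is the classical phenomenon that termwise Laplace transformation of an order-two entire function, e.g.\ of $e^{-u^2}$, yields only a divergent asymptotic series). Worse, the resulting terms $\tilde c_j\,j!\,(1-2^{1-j})\,2^{1-j}\zeta(j)$ are unbounded along a subsequence, so the zeta series you (and the paper, whose proof is silent on this point and open to the same objection) arrive at cannot converge to the integral; as written, the second display is established only as a formal identity, and your proposal, unlike the paper, explicitly claims a justification that does not hold. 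Two smaller points: the coefficient $2^{j-1}/(j!(1-2^{1-j}))$ printed in the Proposition is the reciprocal of what the computation actually produces, which is why you had to square the correction factor in your redefinition of $c_j$; and at $j=1$ setting $c_1=0$ leaves an indeterminate $0/0$ times $\zeta(1)$ rather than removing the term --- the honest statement is that all odd-indexed terms are simply absent because $h_\mu$ is even.
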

\begin{proof}
Using the formula (\cite{Gra-Ryz}):
\begin{equation*}
\int_{\mathbb{R}} |v|^{\mu-1} e^{-\beta v^2} \cos(av)dv = \frac{1}{\beta^{\mu/2}}\Gamma\left(\frac{\mu}{2}\right)e^{-a^2/(4\beta)}{}_1F_1\left(\frac{1-\mu}{2}, \frac{1}{2}; \frac{a^2}{4\beta}\right),
\end{equation*}
with $\beta = 1/(2t), a = u/t$, we get:
\begin{multline*}
\mathbb{E}\left[|\theta_{t,n}|^{\mu-1}\right]  = \frac{2(2t)^{(\mu-1)/2}}{\sqrt{\pi}}\Gamma\left(\frac{\mu}{2}\right) 
 \int_0^{\infty} dr \cosh (r) \sinh^{2n-1}(r)  \int_{0}^{\infty} du s_{t,2n+1}(\cosh(u)\cosh(r))  \\  {}_1F_1\left(\frac{1-\mu}{2}, \frac{1}{2}; \frac{u^2}{2t}\right) = \frac{2(2t)^{(\mu-1)/2}}{\sqrt{\pi}}\Gamma\left(\frac{\mu}{2}\right)  
  \int_0^{\infty} du h_{t,2n+1}(u) {}_1F_1\left(\frac{1-\mu}{2}, \frac{1}{2}; \frac{u^2}{2t}\right),
\end{multline*} 
where we set:
\begin{align*}
h_{t,2n+1}(u) & =  \int_0^{\infty} dr \cosh (r) \sinh^{2n-1}(r) s_{t,2n+1}(\cosh(u)\cosh(r)) dr
\\& = \frac{1}{\cosh^2(u)} \int_u^{\infty} \cosh(x)\sinh(x) \left(\frac{\cosh^2(x)}{\cosh^2(u)}-1\right)^{n-1}s_{t,2n+1}(\cosh(x)) dx.
\end{align*}
When $n=1$, we appeal to the formula \eqref{Hyper} to get 
\begin{align*}
\mathbb{E}\left[|\theta_{t,1}|^{\mu-1}\right] &  = \frac{2(2t)^{(\mu-1)/2}e^{-t/2}}{(2\pi t)^{3/2}\sqrt{\pi}}\Gamma\left(\frac{\mu}{2}\right) \int_0^{\infty} \frac{du}{\cosh^2(u)}  {}_1F_1\left(\frac{1-\mu}{2}, \frac{1}{2}; \frac{u^2}{2t}\right)\int_u^{\infty} x\cosh(x)e^{-x^2/(2t)} dx 
\\& =  \frac{(2t)^{(\mu-1)/2}e^{-t/2}}{\sqrt{\pi} (2\pi t)^{3/2}} \Gamma\left(\frac{\mu}{2}\right) \int_0^{\infty} \frac{du}{\cosh^2(u)}  {}_1F_1\left(\frac{1-\mu}{2}, \frac{1}{2}; \frac{u^2}{2t}\right)\sum_{j \geq 0} \int_{u^2}^{\infty} \frac{x^j}{(2j)!} e^{-x/(2t)} dx
\\& =  \frac{(2t)^{(\mu-1)/2}e^{-t/2}}{\sqrt{\pi} (2\pi t)^{3/2}} \Gamma\left(\frac{\mu}{2}\right) \int_0^{\infty} \frac{du e^{-u^2/(2t)}}{\cosh^2(u)}  {}_1F_1\left(\frac{1-\mu}{2}, \frac{1}{2}; \frac{u^2}{2t}\right)\sum_{j \geq 0} \frac{1}{(2j)!}\int_0^{\infty} (x+u^2)^j e^{-x/(2t)} dx
\\& = \frac{(2t)^{(\mu-1)/2}e^{-t/2}}{\sqrt{\pi} (2\pi t)^{3/2}} \Gamma\left(\frac{\mu}{2}\right) \int_0^{\infty} \frac{du e^{-u^2/(2t)}}{\cosh^2(u)} {}_1F_1\left(\frac{1-\mu}{2}, \frac{1}{2}; \frac{u^2}{2t}\right)\sum_{j \geq 0}\frac{j!}{(2j)!}\sum_{m=0}^j \frac{(2t)^{m+1}}{(j-m)!} u^{2(j-m)}.
\end{align*}
Using the Legendre duplication formula: 
\begin{equation*}
\frac{j!}{(2j)!} = \frac{\sqrt{\pi}}{2^{2j}\Gamma(j+1/2)},
\end{equation*}
and changing the order summation in the last series, we further get 
\begin{align*}
\mathbb{E}\left[|\theta_{t,1}|^{\mu-1}\right] & = \frac{2(2t)^{(\mu-1)/2}e^{-t/2}}{(2\pi t)^{3/2}} \Gamma\left(\frac{\mu}{2}\right)\int_0^{\infty} \frac{du e^{-u^2/(2t)}}{\cosh^2(u)} {}_1F_1\left(\frac{1-\mu}{2}, \frac{1}{2}; \frac{u^2}{2t}\right)\sum_{m \geq 0}\frac{t^m}{2^m}
\sum_{j \geq 0}\left(\frac{u}{2}\right)^{2j}\frac{1}{j!\Gamma(m+j+1/2)}
\\& = \frac{2(2t)^{(\mu-1)/2}e^{-t/2}}{(2\pi t)^{3/2}}\Gamma\left(\frac{\mu}{2}\right) \int_0^{\infty} \frac{du e^{-u^2/(2t)}}{\cosh^2(u)} {}_1F_1\left(\frac{1-\mu}{2}, \frac{1}{2}; \frac{u^2}{2t}\right)\sum_{m \geq 0}\frac{t^m}{2^m}\mathcal{I}_{m-1/2}(u). 
\end{align*}
Finally, The first Kummer transformation (\cite{AAR}):
\begin{equation*}
 e^{-u^2/(2t)} {}_1F_1\left(\frac{1-\mu}{2}, \frac{1}{2}; \frac{u^2}{2t}\right) =  {}_1F_1\left(\frac{\mu}{2}, \frac{1}{2}; -\frac{u^2}{2t}\right)
\end{equation*}
yields the first formula of the proposition. As to the second one, it follows from the expansion of the product 
\begin{equation*}
{}_1F_1\left(\frac{\mu}{2}, \frac{1}{2}; -\frac{u^2}{2}\right)\sum_{m \geq 0}\frac{1}{2^m}\mathcal{I}_{m-1/2}(u) := \sum_{j \geq 0} c_j(\mu,t) u^j,
\end{equation*}
and the integral representation:
\begin{equation*}
\zeta(z) = \frac{2^{z-1}}{(1-2^{1-z})\Gamma(z+1)} \int_0^{\infty} \frac{u^z}{\cosh^2(u)} du, \quad \Re(z) > -1. 
\end{equation*}
\end{proof}


As a corollary of the previous proposition, we obtain the Mellin transform of the time change: 
\begin{equation*}
\int_0^t \tanh^2 r(s)ds,
\end{equation*}
where $(r_t)_{t \geq 0}$ is the diffusion with infinitesimal generator $\mathcal{L}^{(0, 0)}$. Indeed, the following equality in distribution was proved in \cite{Bau-Wan}:
\begin{equation}\label{eq-r-theta-H}
\left(\theta_{t,1} \right)_{t \ge 0} \overset{d}{=} \left(\beta_{\int_0^t \tanh^2 r(s)ds} \right)_{t \ge 0},
\end{equation}
where $(\beta_t)_{t \ge 0}$ is a standard Brownian motion independent from $(r_t)_{t \geq 0}$. As a matter of fact, 
\begin{align*}
\mathbb{E}[|\theta_{t,1}|^{\mu-1}]  & = \mathbb{E}[|\beta_1|^{\mu-1}] \mathbb{E}\left(\int_0^t \tanh^2 r(s)ds \right)^{(\mu-1)/2} 
\\& = \frac{2^{1+(\mu/2)}}{\sqrt{2\pi}}\Gamma\left(\frac{\mu}{2}\right) \mathbb{E}\left(\int_0^t \tanh^2 r(s)ds \right)^{(\mu-1)/2},
\end{align*}
whence we readily deduce: 
\begin{cor}
For any $\mu > 0$, 
\begin{equation}\label{Zeta}
\mathbb{E}\left(\int_0^1 \tanh^2 r(s)ds \right)^{(\mu-1)/2} = \frac{1}{\sqrt{2e}(2\pi)} \int_0^{\infty} \frac{du}{\cosh^2(u)} {}_1F_1\left(\frac{\mu}{2}, \frac{1}{2}; -\frac{u^2}{2}\right)\sum_{m \geq 0}\frac{1}{2^m}\mathcal{I}_{m-1/2}(u).
\end{equation}
\end{cor}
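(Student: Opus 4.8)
The plan is to read off \eqref{Zeta} directly from the Proposition, by dividing its first identity (evaluated at $t=1$) by the elementary Gaussian factor that appears in the factorization of $\mathbb{E}[|\theta_{t,1}|^{\mu-1}]$ displayed just before the statement. No new analytic input is required: the Corollary is nothing but the Proposition's first formula with the trivial Brownian factor removed.

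First I would record the factorization. From the distributional identity \eqref{eq-r-theta-H} and the independence of $(\beta_t)_{t\geq 0}$ and $(r_t)_{t\geq 0}$, conditioning on the path of $r$ gives
\begin{equation*}
|\theta_{t,1}| \;\overset{d}{=}\; \Bigl(\int_0^t \tanh^2 r(s)\,ds\Bigr)^{1/2}\,|\beta_1|
\end{equation*}
with the two factors on the right independent. Raising to the power $\mu-1$, taking expectations and using the Gaussian absolute-moment formula $\mathbb{E}[|\beta_1|^{\mu-1}] = \frac{2^{1+\mu/2}}{\sqrt{2\pi}}\Gamma(\mu/2)$ produces the displayed relation
\begin{equation*}
\mathbb{E}\bigl[|\theta_{t,1}|^{\mu-1}\bigr] = \frac{2^{1+\mu/2}}{\sqrt{2\pi}}\,\Gamma\!\Bigl(\frac{\mu}{2}\Bigr)\,\mathbb{E}\Bigl(\int_0^t \tanh^2 r(s)\,ds\Bigr)^{(\mu-1)/2}.
\end{equation*}
For $\mu>0$ all three expectations are finite — finiteness of the right-hand side of \eqref{Zeta}, hence of the left-hand side, is already guaranteed by the Proposition — and since $\Gamma(\mu/2)>0$ we may divide by the prefactor.

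Next I would set $t=1$ in the first formula of the Proposition and divide it by $\frac{2^{1+\mu/2}}{\sqrt{2\pi}}\Gamma(\mu/2)$. The factor $\Gamma(\mu/2)$ cancels, every power of $t$ disappears, and the integral collapses to $\int_0^\infty \cosh^{-2}(u)\,{}_1F_1(\mu/2,1/2;-u^2/2)\sum_{m\geq0}2^{-m}\mathcal{I}_{m-1/2}(u)\,du$; it then remains only to simplify the numerical prefactor,
\begin{equation*}
\frac{2\cdot 2^{(\mu-1)/2}e^{-1/2}}{(2\pi)^{3/2}}\cdot\frac{\sqrt{2\pi}}{2^{1+\mu/2}} = \frac{e^{-1/2}}{2\pi}\,2^{\,1+(\mu-1)/2-1-\mu/2} = \frac{2^{-1/2}e^{-1/2}}{2\pi} = \frac{1}{\sqrt{2e}\,(2\pi)},
\end{equation*}
which yields exactly \eqref{Zeta}. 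The only step demanding any attention is this last piece of bookkeeping — matching the $\mu$-dependent powers of $2$, the factor $e^{-t/2}$ and the powers of $2\pi t$ at $t=1$ — together with the (already established) convergence of the $du$-integral and the $m$-series; everything else is a verbatim substitution.
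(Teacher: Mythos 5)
Your proposal is correct and follows exactly the paper's own route: the factorization $\mathbb{E}[|\theta_{t,1}|^{\mu-1}]=\mathbb{E}[|\beta_1|^{\mu-1}]\,\mathbb{E}\bigl(\int_0^t \tanh^2 r(s)\,ds\bigr)^{(\mu-1)/2}$ obtained from \eqref{eq-r-theta-H} by conditioning on $(r_s)$, the same Gaussian moment constant $\frac{2^{1+\mu/2}}{\sqrt{2\pi}}\Gamma(\mu/2)$ the paper uses, and division of the Proposition's first formula at $t=1$ by this prefactor. Your bookkeeping of the constants reproduces $\frac{1}{\sqrt{2e}(2\pi)}$ exactly as in \eqref{Zeta}, so the argument matches the paper's proof in substance and detail.
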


\section{The Hopf fibration}
In this section, we deal with the spherical analogue the AdS fibration, commonly known as the Hopf fibration (\cite{Gri}). Here, the base space is the complex projectif space $\mathbb{CP}^n \subset \mathbb{C}^{n+1}$ and the total space is the odd-dimensional sphere
\begin{equation*}
S^{2n+1}:= \{(z_1, \dots, z_{n+1}) \in \mathbb{C}^{n+1}, |z_1|^2+|z_2|^2 +\dots + |z_n|^2 + |z_{n+1}|^2 = 1\},
\end{equation*}
on which the circle acts isometrically. We similarly denote $(w_1, \dots, w_n)$ the inhomogeneous coordinates in the chart $z_{n+1} \neq 0$ and consider a Brownian motion $(w(t))_{t \geq 0}$ on $\mathbb{CP}^n$ starting at zero. Then, the generalized stochastic area process arising from the Hopf fibration is defined by:
\begin{equation*}
\xi_{t,n} := \frac{i}{2} \int_0^t \frac{w(s)  d\overline{w}(s)-\overline{w}(s) dw(s)}{1+\|w(s)\|^2}, \quad \|w(s)\|^2 = \sum_{j=1}^n |w_j(s)|^2.
\end{equation*}
Recall also from \cite{Bau-Wan} that the characteristic function of $\xi_{t,n}$ is given by: 
\begin{equation}\label{Hopf}
\mathbb{E}\left[e^{i\lambda \theta_t}\right] = e^{-n |\lambda| t} \int_0^{\pi/2} p_t^{(n-1, |\lambda|)}(0,r) \frac{dr}{(\cos(r))^{|\lambda|}} 
\end{equation}
where $p_t^{(n-1, |\lambda|)}$ is the heat kernel with respect to Lebesgue measure of the circular Jacobi generator: 
\begin{equation*}
\mathscr{L}^{(n-1, |\lambda|)} = \frac{1}{2} \left[\partial_r^2 + ((2n-1)\cot r - (2|\lambda| + 1)\tan(r)) \partial_r\right], \quad r \in (0,\pi/2),
\end{equation*}
subject to Neumann boundary condition at $r \in \{0, \pi/2\}$. From the the appendix of \cite{Bau-Wan}, we deduce the expansion of $p_t^{(n-1, |\lambda|)}$ in the basis of Jacobi polynomials:
\begin{equation}\label{CirJac} 
p_t^{(n-1, |\lambda|)}(0,r) = \frac{2}{\Gamma(n)}[\cos(r)]^{2|\lambda|+1}[\sin(r)]^{2n-1} \sum_{j \geq 0}(2j+n+|\lambda|) e^{-2j(j+n+|\lambda|)t}\frac{\Gamma(j+n+|\lambda|)}{\Gamma(j+|\lambda|+1)} P_j^{(n-1, |\lambda|)}(\cos(2r)). 
\end{equation}
Rather than performing a kind of a Doob-transform \eqref{Inter} as in the Anti de-Sitter case, we shall derive the density of $\xi_{t,n}$ using direct computations, that is by working out the integral in the RHS of \eqref{Hopf}. This difference between the two methods stems from the fact that the factor $e^{-n|\lambda|t}$ in \eqref{Hopf} encodes the long-time behaviour of $\xi_{t,n}$, while the factor $e^{-\lambda^2t/2}$ (which does the same for $\theta_{t,n}$) is present in \eqref{Expression1} but not in \eqref{ADS}. The issue of our computations is summarized in the following Theorem: 
\begin{teo} 
For any integer $n \geq 1$ and any real $t > 0$, the density $\Phi_{t,n}$ of $\xi_{t,n}$ with respect to Lebesgue measure in $\mathbb{R}$ is given by:
\begin{multline*}
\Phi_{t,n}(v) = \frac{1}{\pi}  \sum_{j \geq 0}(-1)^j \frac{(n)_j}{j!} e^{-2j(j+1)t}  \\ \left\{\frac{2^n(2j+n)t}{[(2j+n)t]^2+v^2} + \int_0^{\infty}e^{-2ju} \left\{\sum_{k=0}^n a_{k,n}(j)e^{-2ku}\right\} \frac{(2j+n)t+u}{[(2j+n)t+u]^2+v^2} du\right\}, \quad v\in \mathbb{R},
\end{multline*}
where for any $j \geq 0$, the coefficients $a_{k,n}(j)$ are defined by the decomposition: 
\begin{align*}
|\lambda|\frac{\Gamma(j+|\lambda|/2)(2j+n+|\lambda|)}{2\Gamma(1+n+j+|\lambda|/2)}\frac{\Gamma(j+n+|\lambda|)}{\Gamma(j+1+|\lambda|)}= 2^n+ \sum_{k=0}^n\frac{a_{k,n}(j)}{2k+2j+|\lambda|}, \quad |\lambda| \geq 0,
\end{align*}
with $a_{0,n}(0) = 0$. 
 \end{teo}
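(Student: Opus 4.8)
The plan is to run the program announced before the statement: insert the Jacobi‑polynomial expansion \eqref{CirJac} into the integral representation \eqref{Hopf}, evaluate the resulting $r$‑integral for each $j$, identify the $|\lambda|$‑dependent factor with the quantity whose partial‑fraction expansion defines the coefficients $a_{k,n}(j)$, and finally invert the Fourier transform term by term against Cauchy kernels.

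First I would substitute \eqref{CirJac} into \eqref{Hopf}. Since $[\cos r]^{2|\lambda|+1}/[\cos r]^{|\lambda|}=[\cos r]^{|\lambda|+1}$, the $j$‑th summand contributes a multiple of $\int_0^{\pi/2}[\cos r]^{|\lambda|+1}[\sin r]^{2n-1}P_j^{(n-1,|\lambda|)}(\cos 2r)\,dr$; interchanging $\sum_j$ with $\int_0^{\pi/2}$ is harmless because at positive time the series in \eqref{CirJac} converges absolutely and its $r$‑dependence is bounded on $(0,\pi/2)$. Next I would set $x=\cos 2r$, turning that integral into $2^{-(|\lambda|+2n+2)/2}\int_{-1}^{1}(1-x)^{n-1}(1+x)^{|\lambda|/2}P_j^{(n-1,|\lambda|)}(x)\,dx$, i.e. the integral of a Jacobi polynomial against a Jacobi weight whose second exponent $|\lambda|/2$ does not match the polynomial's parameter $|\lambda|$. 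I would evaluate this by expanding $P_j^{(n-1,|\lambda|)}$ through \eqref{RepJac}, integrating each monomial against the weight (a Beta integral), recognizing the resulting finite sum as a terminating ${}_2F_1$ of unit argument with lower parameter $n+1+|\lambda|/2$, and summing it by the Chu--Vandermonde identity ${}_2F_1(-j,b;c;1)=(c-b)_j/(c)_j$; here $c-b=1-j-|\lambda|/2$ and $(1-j-|\lambda|/2)_j=(-1)^j(|\lambda|/2)_j$. After tidying the Pochhammer, Gamma and power‑of‑two bookkeeping and multiplying by the prefactor $e^{-n|\lambda|t}$ of \eqref{Hopf}, everything should collapse to
\[
\mathbb{E}\!\left[e^{i\lambda\xi_{t,n}}\right]=\sum_{j\ge 0}(-1)^j\,\frac{(n)_j}{j!}\,e^{-2j(j+n)t}\,e^{-(2j+n)t|\lambda|}\,G_j(|\lambda|),
\]
where $G_j(|\lambda|):=|\lambda|\,\dfrac{\Gamma(j+|\lambda|/2)(2j+n+|\lambda|)}{2\,\Gamma(1+n+j+|\lambda|/2)}\,\dfrac{\Gamma(j+n+|\lambda|)}{\Gamma(j+1+|\lambda|)}$ is precisely the function appearing on the left‑hand side of the decomposition that defines the $a_{k,n}(j)$.

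Given this closed form, the rest is bookkeeping. Simplifying the ratios of Gamma functions shows that $G_j(|\lambda|)$ equals $2^{n}\,|\lambda|\,(2j+n+|\lambda|)\prod_{m=1}^{n-1}(j+m+|\lambda|)$ divided by $\prod_{\ell=0}^{n}(2j+2\ell+|\lambda|)$, so $G_j$ is a rational function of $|\lambda|$ with $\deg(\mathrm{num})\le\deg(\mathrm{denom})$, tending to $2^{n}$ at infinity, whose denominator is a product of distinct linear factors $2j+2\ell+|\lambda|$; partial fractions therefore give $G_j(|\lambda|)=2^n+\sum_{k=0}^{n}a_{k,n}(j)/(2k+2j+|\lambda|)$, and for $j=0$ the factor $|\lambda|$ in the numerator cancels the $k=0$ pole, so $a_{0,n}(0)=0$. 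I would then use $e^{-a|\lambda|}=\tfrac1\pi\int_{\mathbb{R}}\tfrac{a}{a^2+v^2}e^{i\lambda v}\,dv$ for $a>0$, and $1/(2k+2j+|\lambda|)=\int_0^{\infty}e^{-(2k+2j)u}e^{-u|\lambda|}\,du$, to rewrite, for each $j$, the quantity $e^{-(2j+n)t|\lambda|}G_j(|\lambda|)$ as the Fourier transform in $v$ of
\[
\frac1\pi\left\{\frac{2^n(2j+n)t}{[(2j+n)t]^2+v^2}+\int_0^{\infty}e^{-2ju}\Bigl(\textstyle\sum_{k=0}^{n}a_{k,n}(j)e^{-2ku}\Bigr)\frac{(2j+n)t+u}{[(2j+n)t+u]^2+v^2}\,du\right\}.
\]
Summing over $j$ and interchanging $\sum_j$ with the $v$‑integral — legitimate since the displayed bracket has an $L^1(dv)$‑norm bounded by a polynomial in $j$ and is therefore dominated by the Gaussian factor $e^{-2j(j+n)t}$ — and invoking uniqueness of Fourier transforms of finite signed measures, one identifies the resulting series with the law of $\xi_{t,n}$; this is the asserted density.

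The main obstacle is the explicit evaluation of $\int_{-1}^{1}(1-x)^{n-1}(1+x)^{|\lambda|/2}P_j^{(n-1,|\lambda|)}(x)\,dx$ together with its simplification. Since the weight's exponent $|\lambda|/2$ differs from the Jacobi parameter $|\lambda|$, orthogonality is useless and one has to push the hypergeometric machinery through a half‑integer‑shifted Chu--Vandermonde summation; what makes the theorem work is that the outcome is exactly the rational‑times‑Gamma expression $G_j$ anticipated by the statement's decomposition. Everything else — the two Fubini interchanges and the partial‑fraction step — is routine and is controlled by the Gaussian‑in‑$j$ decay inherited from the circular Jacobi heat kernel.
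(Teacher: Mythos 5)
Your proposal is correct and follows essentially the same route as the paper's proof: plug the expansion \eqref{CirJac} into \eqref{Hopf}, reduce the $r$-integral to a terminating ${}_2F_1$ at unit argument (you sum it by Chu--Vandermonde, the paper equivalently uses \eqref{RepJac} together with the symmetry $P_j^{(a,b)}(v)=(-1)^jP_j^{(b,a)}(-v)$ -- your route has the small advantage of covering $\lambda=0$ without a separate limiting argument), then the same partial-fraction decomposition of $G_j(|\lambda|)$ and termwise inversion against Cauchy kernels with the same polynomial-in-$j$ bound on the $a_{k,n}(j)$ justifying Fubini. Note also that the exponential factor $e^{-2j(j+n)t}$ you obtain is indeed what the computation yields (the heat-kernel factor $e^{-2j(j+n+|\lambda|)t}$ times $e^{-n|\lambda|t}$ gives $e^{-2j(j+n)t}e^{-(2j+n)|\lambda|t}$), so the factor $e^{-2j(j+1)t}$ appearing in the statement agrees with it only for $n=1$ and appears to be a typo for general $n$.
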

\begin{proof}
Firstly, we plug \eqref{CirJac} in \eqref{Hopf} and use Fubini Theorem to get: 
\begin{multline*}
\mathbb{E}\left[e^{i\lambda \xi_{t,n}}\right] = \frac{2e^{-n |\lambda| t}}{\Gamma(n)}\sum_{j \geq 0}(2j+n+|\lambda|) 
e^{-2j(j+n+|\lambda|)t}\frac{\Gamma(j+n+|\lambda|)}{\Gamma(j+|\lambda|+1)} \\  \int_0^{\pi/2} [\cos(r)]^{|\lambda|+1}[\sin(r)]^{2n-1}P_j^{(n-1, |\lambda|)}(\cos(2r))dr. 
\end{multline*}
Secondly, we need to compute the above integral. To this end, we expand the Jacobi polynomial as
\begin{equation*}
P_j^{(n-1, |\lambda|)}(\cos(2r)) = \frac{(n)_j}{j!} \sum_{m=0}^j \frac{(-j)_m(j+n+|\lambda|)_m}{(n)_mm!}\sin^{2m}(r)
\end{equation*}
and perform the variable change $v= \cos(r)$:
\begin{align*}
 \int_0^{\pi/2} [\cos(r)]^{|\lambda|+1}[\sin(r)]^{2n-1}P_j^{(n-1, |\lambda|)}(\cos(2r))dr & =  \frac{(n)_j}{j!} \sum_{m=0}^j \frac{(-j)_m(j+n+|\lambda|)_m}{(n)_m m!} \int_0^{1} v^{|\lambda|+1}(1-v^2)^{m+n-1}dv
\\& = \frac{\Gamma(n+j)}{2j!} \frac{\Gamma(1+|\lambda|/2)}{\Gamma(1+n+|\lambda|/2)} \sum_{m=0}^j \frac{(-j)_m(j+n+|\lambda|)_m}{(|\lambda|/2 +n+1)_m m!} 
 \\& = \frac{\Gamma(n+j)}{2j!} \frac{\Gamma(1+|\lambda|/2)}{\Gamma(1+n+|\lambda|/2)} {}_2F_1\left(-j, j+n+|\lambda|, 1+n+\frac{|\lambda|}{2}; 1\right).
 \end{align*}
 But, the representation  \eqref{RepJac} of Jacobi polynomials together with the symmetry relation: 
\begin{align*}
P_j^{(a, b)}(v) = (-1)^jP_j^{(b, a)}(-v),
\end{align*}
imply for any $\lambda \neq 0$,  
 \begin{align*}
\int_0^{\pi/2} [\cos(r)]^{|\lambda|+1}[\sin(r)]^{2n-1}P_j^{(n-1, |\lambda|)}(\cos(2r))dr & = \frac{\Gamma(n+j)\Gamma(1+|\lambda|/2)}{2\Gamma(1+n+|\lambda|/2)(1+n+|\lambda|/2)_j} P_j^{(|\lambda|/2+n, |\lambda|/2-1)}(-1)
\\& =  (-1)^j\frac{\Gamma(n+j)\Gamma(1+|\lambda|/2)}{2\Gamma(1+n+j+|\lambda|/2)} P_j^{(|\lambda|/2-1, |\lambda|/2+n)}(1) 
\\& = \frac{(-1)^j\Gamma(n+j)}{2j!}\frac{(|\lambda|/2)_j\Gamma(1+|\lambda|/2)}{\Gamma(1+n+j+|\lambda|/2)}
\\& = \frac{(-1)^j\Gamma(n+j)}{4j!}\frac{\Gamma(j+|\lambda|/2)|\lambda|}{\Gamma(1+n+j+|\lambda|/2)}. 
 \end{align*}
This expression remains valid for $\lambda = 0$ after taking the limit as $\lambda \rightarrow 0$. More precisely, the variable change $v = \cos(2r)$ shows that 
 \begin{equation*}
 \int_0^{\pi/2} \cos(r)[\sin(r)]^{2n-1}P_j^{(n-1, 0)}(\cos(2r))dr = \frac{1}{2^{n+1}}\int_{-1}^1(1-v)^{n-1}P_j^{(n-1, 0)}(v) dv = \frac{1}{2n} \delta_{j0}.
 \end{equation*}
 On the other hand, if $j \neq 0$ then 
 \begin{equation*}
\lim_{\lambda \rightarrow 0} \frac{(-1)^j\Gamma(n+j)}{4j!}\frac{\Gamma(j+|\lambda|/2)|\lambda|}{\Gamma(1+n+j+|\lambda|/2)} = 0,  
 \end{equation*}
 while if $j=0$, then the relation $x\Gamma(x) = \Gamma(x+1)$ yields: 
  \begin{equation*}
\lim_{\lambda \rightarrow 0} \frac{\Gamma(n)}{2}\frac{\Gamma(|\lambda|/2)|\lambda|}{2\Gamma(1+n+|\lambda|/2)} = \frac{1}{2n}.  
 \end{equation*}
As a result, for any $\lambda \in \mathbb{R}$, 
\begin{align*}
\mathbb{E}\left[e^{i\lambda \xi_{t,n}}\right] & = \sum_{j \geq 0}(-1)^j \frac{(n)_j}{j!} e^{-2j(j+1)t} e^{-(2j+n)|\lambda| t}|\lambda|\frac{(2j+n+|\lambda|)\Gamma(j+|\lambda|/2)}{2\Gamma(1+n+j+|\lambda|/2)}\frac{\Gamma(j+n+|\lambda|)}{\Gamma(j+|\lambda|+1)}.
\end{align*}
Finally, it suffices to invert termwise this expansion. To proceed, we write for $j \geq 1$, 
\begin{align*}
|\lambda|\frac{(2j+n+|\lambda|)\Gamma(j+|\lambda|/2)}{2\Gamma(1+n+j+|\lambda|/2)}\frac{\Gamma(j+n+|\lambda|)}{\Gamma(j+1+|\lambda|)} = \frac{|\lambda|(2j+n+|\lambda|)(j+1+|\lambda|)\cdots (j+(n-1)+|\lambda|)}{2(j+|\lambda|/2)\cdots (n+j+|\lambda|/2)}
\end{align*}
as a rational function of $|\lambda|$ which tends to $2^n$ when $|\lambda| \rightarrow \infty$. Consequently, it may be decomposed as 
\begin{align*}
|\lambda|\frac{\Gamma(j+|\lambda|/2)(2j+n+|\lambda|)}{2\Gamma(1+n+j+|\lambda|/2)}\frac{\Gamma(j+n+|\lambda|)}{\Gamma(j+1+|\lambda|)} & = 2^n+ \sum_{k=0}^n \frac{a_{k,n}(j)}{2k+2j+|\lambda|}.
\\& = 2^n + \int_0^{\infty} e^{-|\lambda|u} \sum_{k=0}^na_{k,n}(j)e^{-(2k+2j)u}du.
\end{align*}
Similarly, if $j=0$ then 
\begin{align*}
|\lambda|\frac{(n+|\lambda|)\Gamma(|\lambda|/2)}{2\Gamma(1+n+|\lambda|/2)}\frac{\Gamma(n+|\lambda|)}{\Gamma(1+|\lambda|)} & = \frac{\Gamma(1+|\lambda|/2)}{\Gamma(1+n+|\lambda|/2)}\frac{\Gamma(1+n+|\lambda|)}{\Gamma(1+|\lambda|)} 
\\& = 2^n + \sum_{k=1}^n \frac{a_{k,n}(0)}{2k+|\lambda|}.
\end{align*}
We also notice that the coefficients $a_{k,n}(j), k \in \{0, \dots, n\}$ are polynomial functions of $j$ with uniformly bounded degrees by $n$. As a matter of fact, the formula 
\begin{equation*}
e^{-\gamma |\lambda|} = \int_{\mathbb{R}} e^{i\lambda v} \frac{\gamma}{\gamma^2+v^2} \frac{dv}{\pi}, \quad \gamma > 0, 
\end{equation*}
together with Fubini Theorem lead to the sought density.  
\end{proof}
\begin{remark}
For any $t > 0$, the density of $\xi_{t,n}/t$ is $t\Phi_{t,n}(tv)$ and its limiting behavior as $t \rightarrow \infty$ is given by the term $j=0$ in the above series. From the identity 
\begin{equation*}
\sum_{k=1}^n\frac{a_{k,n}(0)}{2k} = 1- 2^n, 
\end{equation*}
and the dominated convergence Theorem, we readily compute
\begin{equation*}
\lim_{t \rightarrow \infty}t\Phi_{t,n}(tv) = \frac{1}{\pi}\frac{n}{n^2+v^2}. 
\end{equation*}
Consequently, $\xi_{t,n}/t$ converges in distribution as $t \rightarrow \infty$ to a Cauchy random variable of parameter $n$. This limiting result was already proved in \cite{Bau-Wan} directly from the characteristic function \eqref{Hopf}. 
\end{remark}

In the particular case $n=1$ which corresponds to the Riemann sphere, we can easily compute the coefficients $a_{0,1}(j) = -2j$ and $a_{1,1}(j) = -2(j+1)$ for $j \geq 0$ and obtain the following: 
\begin{cor}
The density $\Phi_{t,1}$ of $\xi_{t,1}$ reduces to: 
\begin{multline*}
\Phi_{t,1}(v) = \frac{2}{\pi}  \sum_{j \geq 0}(-1)^j e^{-2j(j+1)t}  \left\{\frac{(2j+1)t}{[(2j+1)t]^2+v^2} - \int_0^{\infty}e^{-2ju} \left(j + (j+1)e^{-2u}\right) \frac{(2j+1)t+u}{[(2j+1)t+u]^2+v^2} du\right\}.
\end{multline*}
\end{cor}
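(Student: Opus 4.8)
The plan is to specialize the density formula of the preceding Theorem to $n=1$. This requires only two elementary computations: identifying the partial-fraction coefficients $a_{0,1}(j)$ and $a_{1,1}(j)$ appearing there, and then collecting the resulting series.

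First I would simplify the left-hand side of the defining decomposition of the $a_{k,n}(j)$ in the case $n=1$. Since $\Gamma(j+n+|\lambda|)/\Gamma(j+1+|\lambda|)=1$ and $\Gamma(j+|\lambda|/2)/\Gamma(1+n+j+|\lambda|/2)=1/[(j+|\lambda|/2)(1+j+|\lambda|/2)]$, the rational function of $|\lambda|$ to be decomposed is
\[
\frac{2|\lambda|\,(2j+1+|\lambda|)}{(2j+|\lambda|)(2j+2+|\lambda|)},
\]
whose value at $|\lambda|=+\infty$ equals $2=2^{n}$ and which has simple poles at $|\lambda|=-2j$ and $|\lambda|=-2j-2$. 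Evaluating the two residues gives $a_{0,1}(j)=-2j$ and $a_{1,1}(j)=-2(j+1)$; in particular $a_{0,1}(0)=0$, in accordance with the convention adopted in the preceding Theorem (and one may also check $a_{1,1}(0)/2=-1=1-2^{1}$, consistently with the Remark).

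Next I would substitute $n=1$ into the statement of the preceding Theorem, using $(1)_j/j!=1$, $2^{n}=2$, $2j+n=2j+1$, and $\sum_{k=0}^{1}a_{k,1}(j)e^{-2ku}=-2j-2(j+1)e^{-2u}$. Pulling the common factor $2$ out of the braces then turns the general formula into exactly the asserted expression for $\Phi_{t,1}$. I expect no genuine obstacle here; the only point deserving a moment's care is the term $j=0$, where $a_{0,1}(0)$ vanishes so that the $k=0$ contribution to the integral drops out, and one should verify that the surviving $k=1$ term specializes correctly (it does, producing $-2e^{-2u}$ as required).
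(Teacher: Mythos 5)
Your proposal is correct and follows exactly the route the paper takes (the paper simply states the values $a_{0,1}(j)=-2j$, $a_{1,1}(j)=-2(j+1)$ and specializes the Theorem to $n=1$, without writing out the residue computation). Your explicit partial-fraction/residue calculation and the consistency checks with $a_{0,n}(0)=0$ and the Remark are exactly the details the paper leaves to the reader.
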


\section{Winding processes in $\mathbb{C}\mathbb{H}^1$ and in $\mathbb{C}\mathbb{P}^1$}
In this section, we are interested in winding processes around the origin in the Poincar\'e disc $\mathbb{C}\mathbb{H}^1$ and in the complex projective line $\mathbb{C}\mathbb{P}^1$. As in the Euclidean setting, these processes are naturally defined as angular parts of Laplace operators of their corresponding geometrical models. In \cite{Bau-Wan}, the characteristic functions of their fixed-time marginal distributions were expressed as expectations with respect to the hyperbolic Jacobi and ultraspherical operators (see below) and their large-time behavior were determined. In the next paragraph, we shall derive the semi-group density of the winding process in $\mathbb{C}\mathbb{H}^1$.     
\subsection{The Poincar\'e disc}
The Laplace operator of the Poincar\'e disc $\mathbb{C}\mathbb{H}^1$ is given by: 
\begin{equation*}
\frac{(1-|w|^2)^2}{2}\partial_{w\overline{w}}, \quad |w| < 1. 
\end{equation*}
This is the infinitesimal generator of the Brownian motion in $\mathbb{C}\mathbb{H}^1$ and reads in cylindrical coordinates $z=\tanh(r)e^{i\phi}, r \geq 0,$: 
\begin{equation*}
\frac{1}{2}\left(\partial_r^2 + 2\coth(2r)\partial_r + \frac{4}{\sinh^2(2r)} \partial_{\phi}^2\right),
\end{equation*}
Accordingly, the winding process is defined by: 
\begin{equation*} 
\phi_t := \beta_{\int_0^t [4/\sinh^2(2r_s)] dr_s}, \quad t \geq 0, 
\end{equation*}
where $(\beta_t)_{t \geq 0}$ is a real Brownian motion independent from $(r_t)_{t \geq 0}$. By rotation invariance of the Laplace operator, we may assume without loss of generality that the Brownian motion in  $\mathbb{C}\mathbb{H}^1$ starts at $\tanh(r_0) \in (0,1)$. In this respect, it was proved in \cite{Bau-Wan} (see the proof of Theorem 4.2) that for any $\lambda \in \mathbb{R}$: 
\begin{equation*}
\mathbb{E}[e^{i\lambda \phi_t}] = \tanh^{|\lambda|}(r_0) \int_0^{\infty} q_t^{(|\lambda|, -|\lambda|)}(r_0,r) \frac{dr}{\tanh^{|\lambda|}(r)}.
\end{equation*}
Here, $q_t^{(|\lambda|, -|\lambda|)}(r_0,r)$ is the heat kernel with respect to Lebesgue measure of the hyperbolic Jacobi operator:
\begin{equation*}
G := \frac{1}{2}\left(\partial_r^2 + [(2|\lambda|+1)\coth(r) + (1-2|\lambda|)\tanh(r)] \partial_r\right),
\end{equation*}
starting at $r_0 > 0$ and acting on smooth functions on $(0,\infty)$ with Neumann boundary condition at $r=0$. Actually, this kernel is given by the following integral representation (\cite{CGM}): 
\begin{equation*}
q_t^{(|\lambda|, -|\lambda|)}(r_0,r) = \frac{2}{\pi} [\tanh(r)]^{2|\lambda|} \sinh(2r)\int_0^{\infty} e^{-(1+\mu^2)t/2} \phi_{\mu}^{(|\lambda|, -|\lambda|)}(r_0)\overline{\phi_{\mu}^{(|\lambda|, -|\lambda|)}(r)}\frac{d\mu}{|c(\mu)|^2},
\end{equation*}
where 
\begin{align*}
\phi_{\mu}^{(|\lambda|, -|\lambda|)}(r) & = {}_2F_1\left(\frac{1 + i\mu}{2}, \frac{1 - i\mu}{2}, |\lambda|+1; -\sinh^2(r)\right), \quad \overline{\phi_{\mu}^{(|\lambda|, -|\lambda|)}} = \phi_{-\mu}^{(|\lambda|, -|\lambda|)},
\end{align*}
and 
\begin{align}\label{HC}
c(\mu) & = 2^{1-i\mu} \frac{\Gamma(|\lambda|+1)\Gamma(i\mu)}{\Gamma[(i\mu+1)/2]\Gamma[|\lambda|+(1+i\mu)/2]} =  \frac{\Gamma(|\lambda|+1)\Gamma(i\mu/2)}{\Gamma[|\lambda|+(1+i\mu)/2]},
\end{align}
is the Harish-Chandra function (we used the Legendre duplication formula to derive the second equality). As a result, 
\begin{multline*}
\mathbb{E}_{r_0}[e^{i\lambda \phi_t}] =\frac{2}{\pi}  [\tanh(r_0)\tanh(r)]^{|\lambda|} \int_0^{\infty}\sinh(2r) dr \int_0^{\infty} e^{-(1+\mu^2)t/2} \phi_{\mu}^{(|\lambda|, -|\lambda|)}(r_0)\phi_{-\mu}^{(|\lambda|, -|\lambda|)}(r) 
\\ \frac{\Gamma[|\lambda|+(1+i\mu)/2]\Gamma[|\lambda|+(1-i\mu)/2]}{\Gamma^2(|\lambda|+1)} \frac{d\mu}{|\Gamma(i\mu/2)|^2}. 
\end{multline*}
The inversion of this Fourier transform is given in the following theorem: 
\begin{teo}
The density of $\phi_t$ is given by: 
\begin{multline*}
H_t(y) := \frac{1}{2\pi^3} \int_0^{\infty} \sinh(2r) dr \int_0^{\infty} \frac{d\mu}{|\Gamma(i\mu)|^2} e^{-(1+\mu^2)t/2} \int_0^1\int_0^1\frac{v^{-(1+i\mu)/2}(1-v)^{-(1-i\mu)/2}dv}{(1+\sinh^2(r)v)^{(1+i\mu)/2}} 
\\ \frac{u^{(i\mu-1)/2} (1-u)^{-(1+i\mu)/2}du}{(1+\sinh^2(r_0)u)^{(1-i\mu)/2}}  \frac{-\ln [\tanh(r_0)\tanh(r)(1-u)(1-v)]}{\ln^2 [\tanh(r_0)\tanh(r)(1-u)(1-v)] + y^2}. 
\end{multline*}
\end{teo}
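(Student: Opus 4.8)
The plan is to invert the displayed Fourier transform by inserting into it the Euler integral representation \eqref{Euler} of the two Jacobi functions. The gain is that \eqref{Euler} moves the dependence of $\phi_\mu^{(|\lambda|,-|\lambda|)}$ on the parameter $|\lambda|$ --- which otherwise sits inside the third slot of a Gauss hypergeometric function --- out of the transcendental factor and into a single scalar of the form $(\cdot)^{|\lambda|}$; such a scalar is exactly the Fourier transform of a Cauchy kernel, and inverting it is what produces the ratio $-\ln[\cdot]/(\ln^{2}[\cdot]+y^{2})$ in the statement. The argument thus parallels the termwise inversion carried out for the Hopf fibration, except that the spectral variable $\mu$ is now continuous.

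Concretely, with $\alpha=|\lambda|$, $\beta=-|\lambda|$ and hence $\rho=1$, the representation \eqref{Euler} reads
\[
\phi_\mu^{(|\lambda|,-|\lambda|)}(r_0)=\frac{\Gamma(|\lambda|+1)}{\Gamma(\tfrac{1+i\mu}{2})\,\Gamma(|\lambda|+\tfrac{1-i\mu}{2})}\int_0^1\frac{u^{(i\mu-1)/2}(1-u)^{|\lambda|-(1+i\mu)/2}}{(1+\sinh^2(r_0)u)^{(1-i\mu)/2}}\,du,
\]
and similarly for $\phi_{-\mu}^{(|\lambda|,-|\lambda|)}(r)$ upon replacing $\mu$ by $-\mu$. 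Substituting both into $\mathbb{E}_{r_0}[e^{i\lambda\phi_t}]$, the two $\Gamma$-prefactors produced by \eqref{Euler} cancel the two Harish-Chandra factors $\Gamma[|\lambda|+(1\pm i\mu)/2]$ and the $\Gamma^{2}(|\lambda|+1)$ already present, leaving behind
\[
\frac{1}{\Gamma(\tfrac{1+i\mu}{2})\,\Gamma(\tfrac{1-i\mu}{2})\,|\Gamma(i\mu/2)|^{2}}=\frac{1}{|\Gamma(\tfrac{1+i\mu}{2})|^{2}\,|\Gamma(i\mu/2)|^{2}}=\frac{1}{4\pi\,|\Gamma(i\mu)|^{2}},
\]
where the last equality is the Legendre duplication formula \eqref{Leg}; this is the source of the kernel $|\Gamma(i\mu)|^{-2}$ of the theorem. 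Finally the surviving factor $[\tanh(r_0)\tanh(r)]^{|\lambda|}$ is distributed over the two Euler integrands, $[\tanh(r_0)]^{|\lambda|}$ merging with $(1-u)^{|\lambda|}$ and $[\tanh(r)]^{|\lambda|}$ with $(1-v)^{|\lambda|}$; this lowers the integrand exponents to $(1-u)^{-(1+i\mu)/2}$ and $(1-v)^{-(1-i\mu)/2}$, precisely those of the statement, at the cost of a single global scalar $[\tanh(r_0)\tanh(r)(1-u)(1-v)]^{|\lambda|}$.

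Because $\tanh(r_0)$, $\tanh(r)$, $1-u$, $1-v$ all lie in $(0,1)$, the number $a:=-\ln[\tanh(r_0)\tanh(r)(1-u)(1-v)]$ is strictly positive, and $[\tanh(r_0)\tanh(r)(1-u)(1-v)]^{|\lambda|}=e^{-a|\lambda|}=\int_{\mathbb{R}}e^{i\lambda y}\,\tfrac1\pi\,\tfrac{a}{a^{2}+y^{2}}\,dy$ by the Cauchy-Poisson Fourier pair already invoked in the Hopf section. Inserting this identity and exchanging the $y$-integral with those over $r,\mu,u,v$ exhibits $\mathbb{E}_{r_0}[e^{i\lambda\phi_t}]$ as $\int_{\mathbb{R}}e^{i\lambda y}H_t(y)\,dy$ with $H_t$ exactly as claimed (the three reciprocal powers of $\pi$, namely those of $2/\pi$, $1/(4\pi|\Gamma(i\mu)|^2)$ and the Cauchy kernel, combining into the $1/(2\pi^{3})$ prefactor); since $\phi_t$ is a Brownian motion run along the a.s.\ finite and positive clock $\int_0^t 4\sinh^{-2}(2r_s)\,ds$, it has a density, and injectivity of characteristic functions then identifies that density with $H_t$.

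The one delicate point is the legitimacy of the above exchange of integrals, and it is handled in the same spirit as in the Anti-de Sitter case. A crude domination by moduli is too lossy: it replaces the oscillatory $u$- and $v$-integrands by $u^{-1/2}(1-u)^{-1/2}$ and its $v$-analogue --- integrable on $(0,1)$, since the exponents of $u,1-u,v,1-v$ all have real part $-1/2$ --- but the resulting $r$-integral then diverges, the $v$-integral decaying only like $r/\sinh(r)$ as $r\to\infty$ whereas the reference measure carries the factor $\sinh(2r)$. One must instead keep the oscillation in $\mu$, observing that the $u$- and $v$-integrals reconstitute, up to $\Gamma$-factors, the spherical function $\phi_\mu^{(0,0)}$ of the real hyperbolic plane at $r_0$ and at $r$, so that the $\mu$-integration against $e^{-\mu^{2}t/2}|\Gamma(i\mu)|^{-2}$ is, up to a universal constant, a heat kernel on $H^{2}$, which decays like $e^{-r^{2}/(2t)}$ as $r\to\infty$ and hence amply absorbs $\sinh(2r)$; the $u,v$-endpoint behaviour and the range $\mu\to\infty$ cause no trouble for the reasons just indicated and because of the Gaussian factor. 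With these estimates Fubini applies and the theorem follows.
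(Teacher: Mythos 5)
Your computational core coincides with the paper's own proof: the same insertion of the Euler representation \eqref{Euler} with $\rho=1$, the same cancellation of the Euler $\Gamma$-prefactors against the Harish--Chandra weight and $\Gamma^2(|\lambda|+1)$, the same Legendre duplication producing $1/(4\pi|\Gamma(i\mu)|^2)$, and the same Cauchy--Poisson pair trading $[\tanh(r_0)\tanh(r)(1-u)(1-v)]^{|\lambda|}$ for the kernel in $y$; all constants ($\tfrac2\pi\cdot\tfrac1{4\pi}\cdot\tfrac1\pi=\tfrac1{2\pi^3}$) check. Where you depart from the paper is the interchange step: the paper's proof disposes of it in one line by asserting absolute convergence, and your observation that crude domination fails is correct --- after taking moduli the $v$-integral equals $\pi\,{}_2F_1(\tfrac12,\tfrac12;1;-\sinh^2 r)\asymp r\,e^{-r}$, which cannot beat $\sinh(2r)$ --- so on this point you are being more careful than the source.

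However, your substitute argument does not close the gap it opens. Once the factor $-\ln[\tanh(r_0)\tanh(r)(1-u)(1-v)]\big/\bigl(\ln^2[\tanh(r_0)\tanh(r)(1-u)(1-v)]+y^2\bigr)$ is inserted, the integrand of $H_t(y)$ no longer splits into a function of $u$ times a function of $v$ times a function of $r$; consequently the $u$- and $v$-integrals do \emph{not} reconstitute $\phi_\mu^{(0,0)}(r_0)\phi_\mu^{(0,0)}(r)$, the $\mu$-integral is not a hyperbolic heat kernel, and the claimed $e^{-r^2/(2t)}$ decay of the inner integral is unproved. (The reconstitution is valid for $\mathbb{E}_{r_0}[e^{i\lambda\phi_t}]$ itself, where the extra factor $[\cdots]^{|\lambda|}$ does factor --- but that merely undoes the derivation and says nothing about the integrand of $H_t(y)$, which is what must decay faster than $e^{-2r}$.) Moreover, having just shown that the joint integral over $(r,\mu,u,v,y)$ is not absolutely convergent, you cannot end with ``Fubini applies'': Fubini--Tonelli is precisely the tool you have forfeited, and a different interchange device is required --- for instance a cutoff or Gaussian regularization in $\lambda$ with dominated convergence, or conditioning on the clock $A_t=\int_0^t 4\sinh^{-2}(2r_s)\,ds$ and inverting $\lambda\mapsto\mathbb{E}[e^{-\lambda^2A_t/2}]$, which is integrable in $\lambda$ thanks to the factor $\tanh^{|\lambda|}(r_0)<1$ --- together with genuine quantitative bounds on the inner $(\mu,u,v)$-integral (the delicate region being $1-u\asymp e^{-2r}$, where the phase in $\mu$ degenerates and the Gaussian gain is lost). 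You name the right mechanism (oscillation in $\mu$) but apply it to the wrong object and give no estimates, so the key analytic step of the theorem remains open in your write-up --- as, admittedly, it essentially is in the paper's own one-line justification.
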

\begin{proof} 
Using the Euler integral representation \eqref{Euler}, we write:
\begin{multline*}
\phi_{\mu}^{(|\lambda|, -|\lambda|)}(r)  = \frac{\Gamma(|\lambda|+1)}{\Gamma[(1-i\mu)/2]\Gamma[|\lambda|+(1+i\mu)/2]}\\
\int_0^1v^{-(1+i\mu)/2}(1-v)^{|\lambda|-(1-i\mu)/2} \frac{dv}{(1+\sinh^2(r)v)^{(1+i\mu)/2}},
 \end{multline*}
and similarly,
\begin{multline*}
\phi_{-\mu}^{(|\lambda|, -|\lambda|)}(r)  = \frac{\Gamma(|\lambda|+1)}{\Gamma[(1+i\mu)/2]\Gamma[|\lambda|+(1-i\mu)/2]}\\
\int_0^1u^{(i\mu-1)/2}(1-u)^{|\lambda|-(1+i\mu)/2} \frac{du}{(1+\sinh^2(r_0)u)^{(1-i\mu)/2}}. 
 \end{multline*}
Combining these integral representations and keeping in mind \eqref{HC}, we get:
\begin{multline*}
\mathbb{E}_{r_0}[e^{i\lambda \phi_t}] =\frac{2}{\pi}  \int_0^{\infty} \sinh(2r) dr \int_0^{\infty} \frac{d\mu}{|\Gamma[(1+i\mu)/2]\Gamma(i\mu/2)|^2} e^{-(1+\mu^2)t/2} \\ 
\int_0^1\int_0^1\frac{v^{-(1+i\mu)/2}(1-v)^{-(1-i\mu)/2}dv}{(1+\sinh^2(r)v)^{(1+i\mu)/2}} \frac{u^{(i\mu-1)/2} (1-u)^{-(1+i\mu)/2}du}{(1+\sinh^2(r_0)u)^{(1-i\mu)/2}} [\tanh(r_0)\tanh(r)(1-u)(1-v)]^{|\lambda|}.
\end{multline*}
Appealing again to Legendre duplication formula:
\begin{equation*}
\sqrt{\pi} \Gamma(i\mu) = 2^{i\mu-1}\Gamma\left(i\frac{\mu}{2}\right)\Gamma\left(\frac{1+i\mu}{2}\right),
\end{equation*}
we further get: 
\begin{multline*}
\mathbb{E}_{r_0}[e^{i\lambda \phi_t}] =\frac{1}{2\pi^{2}} \int_0^{\infty} \sinh(2r) dr \int_0^{\infty}  \frac{d\mu}{|\Gamma(i\mu)|^2} e^{-(1+\mu^2)t/2} \int_0^1\int_0^1\frac{v^{-(1+i\mu)/2}(1-v)^{-(1-i\mu)/2}dv}{(1+\sinh^2(r)v)^{(1+i\mu)/2}} 
\\ \frac{u^{(i\mu-1)/2} (1-u)^{-(1+i\mu)/2}du}{(1+\sinh^2(r_0)u)^{(1-i\mu)/2}} [\tanh(r_0)\tanh(r)(1-u)(1-v)]^{|\lambda|}.
\end{multline*}
Since 
\begin{align*}
[\tanh(r_0)\tanh(r)(1-u)(1-v)]^{|\lambda|} & = e^{|\lambda| \ln [\tanh(r_0)\tanh(r)(1-u)(1-v)]} 
\\& = \frac{1}{\pi}\int_{\mathbb{R}} e^{i\lambda y}\frac{-\ln [\tanh(r_0)\tanh(r)(1-u)(1-v)]}{\ln^2 [\tanh(r_0)\tanh(r)(1-u)(1-v)] + y^2} dy,
\end{align*}
and since the multiple integral converges absolutely, then Fubini Theorem applies and yields the desired density. 
\end{proof}

\subsection{The complex projective line}
Let $\mathbb{C}\mathbb{P}^1 = \mathbb{C} \cup \infty$ denote the complex projective line. Then, the generator of the Brownian motion in $\mathbb{C}\mathbb{P}^1$ reads in cylindrical coordinates $z = \tan(r)e^{i\theta}, r \in (0,\pi/2),$: 
\begin{equation*}
\frac{1}{2}\left(\partial_r^2 + 2\cot(2r)\partial_r + \frac{4}{\sin^2(2r)} \partial_{\phi}^2\right). 
\end{equation*}
Similarly, its winding process is defined as the time-changed real Brownian motion: 
\begin{equation*} 
\psi_t := \beta_{\int_0^t [4/\sin^2(2r_s)] dr_s}, \quad t \geq 0,
\end{equation*}
where $(r_s)_{s \geq 0}$ is now the Jacobi diffusion starting at $r_0 \in (0,\pi/2)$ whose generator is given by: 
\begin{equation*}
\frac{1}{2}\left(\partial_r^2 + 2\cot(2r)\partial_r\right),
\end{equation*}
and $(\beta_t)_{t \geq 0}$ is a real Brownian motion independent from $(r_t)_{t \geq 0}$. In \cite{Bau-Wan}, the authors proved that for any $\lambda \in \mathbb{R}$ (see the proof of Theorem 4.1)\footnote{As in the hyperbolic setting, the rotation invariance of the Laplace operator allows to assume without loss of generality that the Brownian motion in $\mathbb{CP}^1$ starts at $r_0 \in (0,\pi/2)$.}: 
\begin{equation}\label{CFW}
\mathbb{E}[e^{i\lambda \phi_t}] = \sin^{|\lambda|}(2r_0)e^{-2(\lambda^2+|\lambda|)t} \int_0^{\pi/2} p_t^{(|\lambda|, |\lambda|)}(r_0,r) \frac{dr}{\sin^{|\lambda|}(2r)}, 
\end{equation}
where $p_t^{(|\lambda|, |\lambda|)}(r_0,r)$ is the semi-group density with respect to Lebesgue measure of the ultraspherical operator: 
\begin{equation*}
\frac{1}{2}\partial_r^2 + \left(\left(|\lambda|+\frac{1}{2}\right) \cot(r) - \left(|\lambda|+\frac{1}{2}\right)\tan(r)\right)\partial_r, 
\end{equation*}
in $(0,\pi/2)$ with Neumann boundary conditions. In the following proposition, we compute the integral displayed in the RHS of \eqref{CFW} as a series of ultraspherical polynomials in the variable $r_0$: 

\begin{pro}\label{proCP1}
For any $t > 0$, 
\begin{multline*}
\mathbb{E}[e^{i\lambda \phi_t}] = \frac{\Gamma(|\lambda|+1/2)\sqrt{\pi}}{4\Gamma(|\lambda|+1)} \sin^{|\lambda|}(2r_0)e^{-2(\lambda^2+|\lambda|)t}\sum_{j \geq 0} e^{-4j(2j+2|\lambda|+1)t} \left(4j+2|\lambda|+1\right)
\\ \frac{|\lambda|\Gamma(j+|\lambda|/2)}{\Gamma(|\lambda|/2+j+3/2)}\frac{\Gamma(|\lambda|+j+1/2)}{\Gamma(|\lambda|+1/2)j!} \frac{C_{2j}^{(|\lambda|+1/2)}(\cos(2r_0))}{C_{2j}^{(|\lambda|+1/2)}(1)}. 
\end{multline*}
\end{pro}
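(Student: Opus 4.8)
\emph{Proof proposal.} The plan is to diagonalise the ultraspherical semigroup, insert the resulting Jacobi‑polynomial expansion of $p_t^{(|\lambda|,|\lambda|)}(r_0,r)$ into \eqref{CFW}, and integrate the series termwise. Since the eigenfunctions of the ultraspherical operator of \eqref{CFW} --- which is the circular Jacobi generator $\mathscr{L}^{(|\lambda|,|\lambda|)}$ --- are the Jacobi polynomials $P_k^{(|\lambda|,|\lambda|)}(\cos 2r)$ with eigenvalues $-2k(k+2|\lambda|+1)$, and its reversible measure is $\sin^{2|\lambda|+1}(r)\cos^{2|\lambda|+1}(r)\,dr$, the $L^2$‑norm of Jacobi polynomials recalled in Section 2 gives, for an arbitrary starting point $r_0$,
\[
p_t^{(|\lambda|,|\lambda|)}(r_0,r) = 2\sin^{2|\lambda|+1}(r)\cos^{2|\lambda|+1}(r)\sum_{k\ge 0}(2k+2|\lambda|+1)\frac{k!\,\Gamma(k+2|\lambda|+1)}{\Gamma(k+|\lambda|+1)^2}e^{-2k(k+2|\lambda|+1)t}P_k^{(|\lambda|,|\lambda|)}(\cos 2r_0)P_k^{(|\lambda|,|\lambda|)}(\cos 2r),
\]
which reduces to \eqref{CirJac} (after the relabelling $n-1\mapsto|\lambda|$) when $r_0=0$. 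Substituting this into \eqref{CFW}, using $\sin^{2|\lambda|+1}(r)\cos^{2|\lambda|+1}(r) = 2^{-(2|\lambda|+1)}\sin^{2|\lambda|+1}(2r)$, and interchanging the sum with the integral over $(0,\pi/2)$ --- legitimate because $|P_k^{(|\lambda|,|\lambda|)}|$ is bounded on $[-1,1]$ by a fixed power of $k$ while $e^{-2k(k+2|\lambda|+1)t}$ decays faster than any power --- reduces everything to the evaluation, for each $k$, of $I_k:=\int_0^{\pi/2}\sin^{|\lambda|+1}(2r)\,P_k^{(|\lambda|,|\lambda|)}(\cos 2r)\,dr$.

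By the parity $P_k^{(|\lambda|,|\lambda|)}(-x)=(-1)^kP_k^{(|\lambda|,|\lambda|)}(x)$, the change of variable $v=\cos 2r$ turns $I_k$ into $\frac{1}{2}\int_{-1}^1(1-v^2)^{|\lambda|/2}P_k^{(|\lambda|,|\lambda|)}(v)\,dv$, which vanishes for odd $k$; hence only the terms $k=2j$ survive, and their eigenvalue exponents become $e^{-4j(2j+2|\lambda|+1)t}$, matching the stated series. To compute $I_{2j}$ I would first pass from $P_{2j}^{(|\lambda|,|\lambda|)}$ to the Gegenbauer polynomial $C_{2j}^{(|\lambda|+1/2)}$ by inverting \eqref{DefGeg}, then use the quadratic‑transformation representation \eqref{Even} to write $C_{2j}^{(|\lambda|+1/2)}(v)$ as a terminating ${}_2F_1$ in $v^2$. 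Expanding this ${}_2F_1$ and integrating term by term against the Beta density, via $\int_{-1}^1(1-v^2)^{|\lambda|/2}v^{2m}\,dv=\Gamma(m+1/2)\Gamma(|\lambda|/2+1)/\Gamma(m+|\lambda|/2+3/2)$ and $\Gamma(m+1/2)/(1/2)_m=\sqrt{\pi}$, collapses the sum to the terminating Gauss series ${}_2F_1(-j,j+|\lambda|+1/2,|\lambda|/2+3/2;1)$, which the Chu--Vandermonde identity evaluates as $(1-j-|\lambda|/2)_j/(|\lambda|/2+3/2)_j=(-1)^j(|\lambda|/2)_j/(|\lambda|/2+3/2)_j$. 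After simplifying Pochhammer ratios this yields a closed form for $I_{2j}$ carrying the factor $|\lambda|\,\Gamma(j+|\lambda|/2)/\Gamma(j+|\lambda|/2+3/2)$.

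It then remains to reassemble the series. Writing $P_{2j}^{(|\lambda|,|\lambda|)}(\cos 2r_0)=\dfrac{(|\lambda|+1)_{2j}}{(2j)!}\,C_{2j}^{(|\lambda|+1/2)}(\cos 2r_0)/C_{2j}^{(|\lambda|+1/2)}(1)$ --- using $C_{2j}^{(|\lambda|+1/2)}(1)=(2|\lambda|+1)_{2j}/(2j)!$, itself a consequence of \eqref{DefGeg} --- all the Pochhammer symbols with subscript $2j$ telescope to the single ratio $\Gamma(2|\lambda|+1)/\Gamma(|\lambda|+1)^2$, and the Legendre duplication formula \eqref{Leg} then converts the remaining powers of $2$ (those carried along from $\sin^{2|\lambda|+1}\cos^{2|\lambda|+1}$) together with $\Gamma(2|\lambda|+1)$ into the prefactor displayed in the proposition. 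The main obstacle is the evaluation of $I_{2j}$: one has to choose precisely the ${}_2F_1$‑representation \eqref{Even} of the even Gegenbauer polynomial for which term‑by‑term integration against the Jacobi‑type weight lands on a Gauss‑summable series, and then push through the ensuing Gamma‑function bookkeeping; the interchange of summation and integration, though routine, also relies on the uniform polynomial bound for Jacobi polynomials on $[-1,1]$ mentioned above.
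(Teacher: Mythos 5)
Your proposal is correct and follows essentially the same route as the paper: expand $p_t^{(|\lambda|,|\lambda|)}(r_0,r)$ in Jacobi/Gegenbauer polynomials, integrate termwise against $\sin^{|\lambda|+1}(2r)$, note that odd-degree terms vanish, and evaluate the even ones via \eqref{Even} as a terminating ${}_2F_1$ at unity. The only (harmless) deviation is that you sum that ${}_2F_1$ by Chu--Vandermonde, whereas the paper identifies it with $P_j^{(|\lambda|/2-1,(|\lambda|+1)/2)}(1)$ via the Jacobi symmetry relation and then treats $\lambda=0$ by a separate limit --- a step your route renders essentially unnecessary.
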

\begin{proof}
The ultraspherical operator is a circular Jacobi operator with equal parameters. From the appendix of \cite{Bau-Wan}, the semi-group density $p_t^{(|\lambda|, |\lambda|)}$ admits the following expansion:
\begin{multline*}
p_t^{(|\lambda|, |\lambda|)}(r_0,r)  = \frac{\sin^{2|\lambda|+1}(2r)}{2^{2|\lambda|}}\sum_{j \geq 0}(2j+2|\lambda|+1)\frac{j!\Gamma(j+2|\lambda|+1)}{\Gamma^2(j+|\lambda|+1)} \\ 
e^{-2j(j+2|\lambda|+1)t}P_j^{(|\lambda|, |\lambda|)}(\cos(2r_0))P_j^{(|\lambda|, |\lambda|)}(\cos(2r)).
\end{multline*} 
From the relation between Jacobi and ultraspherical polynomials and using Legendre duplication formula, we can rewrite this kernel as: 
\begin{multline}\label{SGU}
p_t^{(|\lambda|, |\lambda|)}(r_0,r) = \frac{\Gamma(|\lambda|+1/2)}{\Gamma(|\lambda|+1)\sqrt{\pi}} [\sin(2r)]^{2|\lambda|+1} \sum_{j \geq 0}(2j+2|\lambda|+1) e^{-2j(j+2|\lambda|+1)t} 
\\ \frac{C_j^{(|\lambda|+1/2)}(\cos(2r_0))C_j^{(|\lambda|+1/2)}(\cos(2r))}{C_j^{(|\lambda|+1/2)}(1)}. 
 \end{multline}
By the virtue of \eqref{CFW} and of the bound: 
\begin{equation*}
|C_j^{(|\lambda|+1/2)}(\cos(2r))| \leq C_j^{(|\lambda|+1/2)}(1) = \frac{(2|\lambda|+1)_j}{j!},
\end{equation*}
 we are lead to the following integral: 
\begin{align*}
\int_0^{\pi/2} C_j^{(|\lambda|+1/2)}(\cos(2r))[\sin(2r)]^{|\lambda|+1} dr & = \frac{1}{2}\int_{-1}^{1}C_j^{(|\lambda|+1/2)}(v) (1-v^2)^{|\lambda|/2} dv.
 \end{align*}
 This integral vanishes when $j$ is odd since $C_j^{|\lambda|+1/2}$ is an odd polynomial. Otherwise, we shall appeal to \eqref{Even} to write: 
\begin{align*}
\frac{1}{2}\int_{-1}^{1} C_{2j}^{(|\lambda|+1/2)}(v) (1-v^2)^{|\lambda|/2} dv & = \frac{(-1)^j}{2}\frac{(|\lambda|+1/2)_j}{j!} \sum_{m=0}^j\frac{(-j)_m(j+|\lambda|+1/2)_m}{m!(1/2)_m} 
\int_0^1v^{m-1/2}(1-v)^{|\lambda|/2} dv  
\\& = \frac{(-1)^j}{2}\frac{\sqrt{\pi}\Gamma(|\lambda|/2+1)}{\Gamma((|\lambda|+3)/2)}\frac{(|\lambda|+1/2)_j}{j!}  {}_2F_1\left(-j, j +|\lambda|+\frac{1}{2}, \frac{|\lambda|+3}{2}; 1\right). 
 \end{align*}
 But, if $\lambda \neq 0$ then,
 \begin{align*}
  {}_2F_1\left(-j, j +|\lambda|+\frac{1}{2}, \frac{|\lambda|+3}{2}; 1\right) & = \frac{j!}{((|\lambda|+3)/2)_j}P_j^{((|\lambda|+1)/2, |\lambda|/2 - 1)}(-1) 
  \\& = (-1)^j  \frac{j!}{((|\lambda|+3)/2)_j} P_j^{(|\lambda|/2 -1, (|\lambda|+1)/2)}(1)
 \\& = (-1)^j  \frac{(|\lambda|/2)_j}{((|\lambda|+3)/2)_j},
  \end{align*}
whence,
 \begin{align*}
\frac{1}{2}\int_{-1}^{1} C_{2j}^{(|\lambda|+1/2)}(v)(1-v^2)^{|\lambda|/2} dv 
& = \frac{1}{2}\frac{(|\lambda|+1/2)_j}{j!}\frac{\sqrt{\pi}\Gamma(|\lambda|/2+1)}{\Gamma((|\lambda|+3)/2)} \frac{(|\lambda|/2)_j}{((|\lambda|+3)/2)_j}
\\& = \frac{\sqrt{\pi}|\lambda|\Gamma(j+|\lambda|/2)}{4\Gamma(|\lambda|/2+j+3/2)}\frac{\Gamma(|\lambda|+j+1/2)}{\Gamma(|\lambda|+1/2)j!}.
\end{align*}
This expression remains valid when $|\lambda| = 0$ if we take the limit $\lambda \rightarrow 0$. Indeed, 
\begin{equation*}
C_{j}^{(1/2)} = P_{j}^{(0,0)}, \quad j \geq 0,
\end{equation*}
are Legendre polynomials which are orthogonal with respect to the uniform measure in $[-1,1]$. As a matter of fact, 
\begin{equation*}
\frac{1}{2}\int_{-1}^{1} C_{2j}^{(1/2)}(v) dv = \delta_{j0}. 
\end{equation*}
On the other hand, if $j \neq 0$ then 
\begin{equation*}
\lim_{\lambda \rightarrow 0} \frac{\sqrt{\pi}|\lambda|\Gamma(j+|\lambda|/2)}{4\Gamma(|\lambda|/2+j+3/2)}\frac{\Gamma(|\lambda|+j+1/2)}{\Gamma(|\lambda|+1/2)j!} = 0,
\end{equation*}
while when $j=0$, the relation $x\Gamma(x) = \Gamma(x+1)$ yields:
\begin{equation*}
\lim_{\lambda \rightarrow 0} \frac{\sqrt{\pi}|\lambda|\Gamma(|\lambda|/2)}{4\Gamma(|\lambda|/2+3/2)} = \lim_{\lambda \rightarrow 0} \frac{\sqrt{\pi}\Gamma(1+ |\lambda|/2)}{2\Gamma(|\lambda|/2+3/2)} = 1. 
\end{equation*}
Remembering \eqref{SGU}, we are done. 
\end{proof} 
Now, \eqref{LapUlt} allows to express
\begin{align}\label{Four1}
\frac{\Gamma(|\lambda|+1/2)\sqrt{\pi}}{\Gamma(|\lambda|+1)}\frac{C_{2j}^{(|\lambda|+1/2)}(\cos(2r_0))}{C_{2j}^{(|\lambda|+1/2)}(1)} & = \int_0^{\pi} \left(\cos(2r_0) + i\sin(2r_0)\cos\eta\right)^{2j} \sin^{2|\lambda+1}(\eta) d\eta \nonumber 
\\&= \int_{\mathbb{R}} e^{i\lambda v} \int_0^{\pi} \left(\cos(2r_0) + i\sin(2r_0)\cos\eta\right)^{2j} \sin(\eta)\frac{-2\ln(\sin(\eta))}{4\ln^2(\sin(\eta))+v^2}  d\eta \frac{dv}{\pi},
\end{align}
while for any $j \geq 0$, 
\begin{align}\label{Four2}
\frac{|\lambda|/2\Gamma(j+|\lambda|/2)}{\Gamma(|\lambda|/2+j+3/2)} &= \frac{1}{\Gamma(1/2)}\int_0^1 \eta^{|\lambda|/2+j}\frac{d\eta}{\sqrt{1-\eta}} \nonumber
\\& =  \frac{2}{\sqrt{\pi}} \int_{\mathbb{R}} e^{i\lambda v} \int_0^1 \eta^{j}\frac{-\ln(\eta)}{\ln^2(\eta)+4v^2}  \frac{d\eta}{\sqrt{1-\eta}} \frac{dv}{\pi},
\end{align}
and 
\begin{eqnarray}\label{Four3}
\sin^{|\lambda|}(2r_0) & = & \int_{\mathbb{R}} e^{i\lambda v}\frac{-\ln(\sin(2r_0)))}{\ln^2(\sin(\eta))+v^2}  \frac{dv}{\pi}, \quad r _0 \in (0,\pi/2), \\
e^{-2\lambda^2t} & =&   \int_{\mathbb{R}} e^{i\lambda v} e^{-v^2/(8t)} \frac{dv}{2\sqrt{2\pi}}.
\end{eqnarray}
Finally, for any $j \geq 0$, 
\begin{equation*}
\left(4j+2|\lambda|+1\right)\frac{\Gamma(|\lambda|+j+1/2)}{\Gamma(|\lambda|+1/2)} e^{-2|\lambda| t(4j+1)} =  Y_j(|\lambda|)e^{-2|\lambda| t(4j+1)}
\end{equation*}
for some polynomial $Y$ in $|\lambda|$ of degree $j+1$. As a matter of fact, 
 \begin{equation*}
\left(4j+2|\lambda|+1\right)\frac{\Gamma(|\lambda|+j+1/2)}{\Gamma(|\lambda|+1/2)} e^{-2|\lambda| t(4j+1)} =  (-1)^{j+1}Y_j\left(\frac{d}{ds}\right)e^{-|\lambda| s} {}_{|s=(8j+2)t},
\end{equation*}
or equivalently 
\begin{equation}\label{Four4}
\left(4j+2|\lambda|+1\right)\frac{\Gamma(|\lambda|+j+1/2)}{\Gamma(|\lambda|+1/2)} e^{-2|\lambda| t(4j+1)} =  (-1)^{j+1} \int_{\mathbb{R}} e^{i\lambda v} Y_j\left(\frac{d}{ds}\right)\frac{s}{s^2+v^2} {}_{|s=(8j+2)t} \frac{dv}{\pi}.
\end{equation}
Combining \eqref{Four1}, \eqref{Four2}, \eqref{Four3} and \eqref{Four4}, we can write for any integer $j \geq 0$
\begin{multline*}
\frac{|\lambda|/2\Gamma(j+|\lambda|/2)}{\Gamma(|\lambda|/2+j+3/2)}  \frac{\Gamma(|\lambda|+1/2)\sqrt{\pi}}{\Gamma(|\lambda|+1)}\frac{C_{2j}^{(|\lambda|+1/2)}(\cos(2r_0))}{C_{2j}^{(|\lambda|+1/2)}(1)} \sin^{|\lambda|}(2r_0) e^{-2\lambda^2t}
\\ \left(4j+2|\lambda|+1\right)\frac{\Gamma(|\lambda|+j+1/2)}{\Gamma(|\lambda|+1/2)} e^{-2|\lambda| t(4j+1)} 
\end{multline*}
as a Fourier transform in the variable $\lambda$ of some function $k_{t,j}(v)$.  Keeping in mind Proposition \ref{proCP1}, we deduce: 
\begin{cor}
For any $t \geq 0$, the density of $\phi_t$ is given by 
\begin{equation*}
K_t(v) = \frac{1}{2} \sum_{j \geq 0}e^{-4j(2j+1)t} k_{t,j}(v), \quad v \in \mathbb{R}.
\end{equation*}
\end{cor}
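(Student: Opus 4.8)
The plan is to assemble the pieces already laid out in the run-up to the corollary. Starting from Proposition \ref{proCP1}, the characteristic function $\mathbb{E}[e^{i\lambda\phi_t}]$ is written as $\tfrac{1}{2}\sum_{j\geq 0} e^{-4j(2j+1)t}$ times a product of five $\lambda$-dependent factors: the gamma-ratio $|\lambda|\Gamma(j+|\lambda|/2)/\Gamma(|\lambda|/2+j+3/2)$ (up to a constant), the normalized Gegenbauer value $\Gamma(|\lambda|+1/2)\sqrt{\pi}\,C_{2j}^{(|\lambda|+1/2)}(\cos 2r_0)/[\Gamma(|\lambda|+1)C_{2j}^{(|\lambda|+1/2)}(1)]$, the prefactor $\sin^{|\lambda|}(2r_0)$, the Gaussian $e^{-2\lambda^2 t}$, and finally $(4j+2|\lambda|+1)\Gamma(|\lambda|+j+1/2)/\Gamma(|\lambda|+1/2)\,e^{-2|\lambda|t(4j+1)}$ (here I absorb the remaining $e^{-2|\lambda|t}$ from $e^{-2(\lambda^2+|\lambda|)t}$ and the $e^{-8j|\lambda|t}$ from $e^{-4j(2j+2|\lambda|+1)t}$ into this last factor, which accounts for the exponent $4j+1$). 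Each of these five factors has been displayed as an explicit Fourier transform in $\lambda$ in \eqref{Four1}--\eqref{Four4}: a Cauchy-type kernel, a beta-integral against a Cauchy kernel, a Cauchy kernel, the Gaussian $e^{-v^2/(8t)}/(2\sqrt{2\pi})$, and a differential-operator-applied Cauchy kernel respectively.

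First I would invoke the elementary fact that the (inverse) Fourier transform of a product is the iterated convolution of the inverse Fourier transforms: if $g_1,\dots,g_5$ are the functions of $v$ appearing in \eqref{Four1}--\eqref{Four4}, then the product of the five factors equals $\int_{\mathbb{R}} e^{i\lambda v}\,(g_1 * g_2 * g_3 * g_4 * g_5)(v)\,dv$. I would simply \emph{define} $k_{t,j}$ to be the constant $\Gamma(|\lambda|+1/2)$-free prefactor $1/(2\Gamma(1/2))$-type normalization times this fivefold convolution (the paper has already announced ``as a Fourier transform in the variable $\lambda$ of some function $k_{t,j}(v)$'', so $k_{t,j}$ is defined by this very identity). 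Then $\mathbb{E}[e^{i\lambda\phi_t}] = \tfrac12\sum_{j\geq 0} e^{-4j(2j+1)t}\int_{\mathbb{R}} e^{i\lambda v} k_{t,j}(v)\,dv$, and interchanging sum and integral gives $\mathbb{E}[e^{i\lambda\phi_t}] = \int_{\mathbb{R}} e^{i\lambda v} K_t(v)\,dv$ with $K_t = \tfrac12\sum_j e^{-4j(2j+1)t} k_{t,j}$. By Fourier inversion (and since $\phi_t$ has a density, e.g. because it is a time-changed Brownian motion run for a strictly positive time) $K_t$ is the density of $\phi_t$.

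The real work is justifying the two interchanges: (i) writing the fivefold product as a single Fourier transform requires each $g_i \in L^1(\mathbb{R})$ (or at least that the convolution is well defined and that Fubini applies when one unwinds all the nested integrals over $r$, $\mu$ — wait, no $\mu$ here — over $r_0$ is fixed, over the auxiliary variables $\eta$ in \eqref{Four1} and \eqref{Four2}, and over the five copies of the Fourier variable); (ii) summing over $j$ and integrating termwise requires a dominating bound. For (i) one notes each $g_i$ is a bounded, integrable function — the Cauchy kernels $s/(s^2+v^2)$ with $s>0$ are integrable, the $\eta$-integrals in \eqref{Four1}--\eqref{Four2} are over compact sets with integrable weights, the Gaussian is Schwartz, and the differential operator $Y_j(d/ds)$ applied to a Cauchy kernel at $s=(8j+2)t>0$ produces a rational function decaying like $v^{-2}$, hence integrable; so the convolution exists and Fubini is legitimate. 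For (ii) the decisive estimate is the Gaussian factor $e^{-4j(2j+1)t}$ together with polynomial-in-$j$ control of everything else: the Gegenbauer values are bounded by $C_{2j}^{(|\lambda|+1/2)}(1)=(2|\lambda|+1)_{2j}/(2j)!$ which cancels in the normalized ratio leaving something $\leq 1$ pointwise in $v$, the beta-type and gamma-type factors grow at most polynomially in $j$ uniformly on the relevant ranges, and $Y_j$ has degree $j+1$ but its contribution is dwarfed by $e^{-4j(2j+1)t}$ for any fixed $t>0$. I expect step (ii) — producing a clean $j$-uniform, $v$-uniform (or $L^1(dv)$) bound on $k_{t,j}$ that is summable against $e^{-4j(2j+1)t}$ — to be the main technical obstacle, since it requires tracking the growth of the polynomial $Y_j$ and of the iterated convolutions; everything else is bookkeeping of the Fourier-transform identities \eqref{Four1}--\eqref{Four4} already established above.
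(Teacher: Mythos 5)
Your proposal is correct and follows essentially the same route as the paper: it combines Proposition \ref{proCP1} with the Fourier representations \eqref{Four1}--\eqref{Four4}, identifies the product of the $\lambda$-dependent factors as the Fourier transform of a (convolution-defined) function $k_{t,j}$, and concludes by termwise Fourier inversion of the series. The only difference is that you spell out the Fubini and dominated-convergence justifications (integrability of each kernel and the $e^{-4j(2j+1)t}$-domination in $j$) that the paper leaves implicit, which is a welcome but not divergent addition.
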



\begin{thebibliography}{99}
\bibitem{AAR}\emph{G. E. Andrews, R. Askey, R. Roy}. Special functions. {\it Cambridge University Press}. 1999.
\bibitem{Aya-Int}\emph{K. Ayaz, A. Intissar}. Selberg trace formulae for heat and wave kernels of Maass Laplacians on compact forms of the complex hyperbolic space $H_n(\mathbb{C}), n \geq 2$. {\it Diff. Geom. Appl}. 2001. 
\bibitem{Bau-Dem}\emph{F. Baudoin, N. Demni}. Integral representation of the sub-elliptic heat kernel on the Anti-de Sitter space. {\it Archiv der Math} (Basel). {\bf 111}, (2018), no. 4. 399-406.
\bibitem{Bau-Wan}\emph{F. Baudoin, J. Wang} Stochastic areas, winding numbers and Hopf fibrations. {\it Probab. Theory Related Fields}, {\bf 169}, (2017), no. 3-4, 977-1005.
\bibitem{BPY}\emph{P. Biane, J. Pitman, M. Yor}. Probability laws related to the Jacobi theta and Riemann zeta functions, and Brownian excursions. {\it Bull. Amer. Math. Soc. (N.S.)} {\bf 38}, (2001), no. 4, 435-465. 
\bibitem{CGM}\emph{F, Chouchene, L. Gallardo, M. Mili}. The Heat Semigroup for the Jacobi?Dunkl Operator and the Related Markov Processes. {\it Potential Analysis.} {\bf Vol. 25}, (2006), Issue 2, 103-119. 
\bibitem{Gav}\emph{B. Gaveau}. Principe de moindre action, propagation de la chaleur et estim\'ees sous elliptiques sur certains groupes nilpotents. {\it Acta Math}. {\bf 139}, (1-2), 95-153, (1977).
\bibitem{Gra-Ryz} \emph{I.S. Gradshteyn, I.M. Ryzhik}, Table of integrals, series and products, {\it 5th ed., Academic Press, Boston, MA}, 1994.
\bibitem{Gri}\emph{E. L. Grinberg}. Spherical harmonics and integral geometry on projective spaces. {\it Trans. Amer. Math. Soc.}, {\bf 279}, no. 1. 1983. 
\bibitem{Int-OM}\emph{A. Intissar, M. V. Ould Moustapha}. Explicit formulae for the wave kernels for the laplacians $\Delta_{\alpha \beta}$ in the Bergman Ball $B^n, n \geq 1$. {\it Ann. Glo. Anal. Geom}. {\bf 15}, (1997), 221-234. 
\bibitem{Koor}\emph{T. Koornwinder}. Jacobi functions and analysis on non compact semi simple Lie groups. {\it Special functions: group theoretical aspects and applications}, 1-85, {\it Math. Appl., Reidel, Dordrecht}, 1984. 
\bibitem{Mo}\emph{Z. Mouayn}. Coherent states attached to Landau levels on the Poincar\'{e} disc, \textit{J. Phys. A: Math. Gen}. {\bf 38}, (2005) 9309-9316. 
\bibitem{Rev-Yor}\emph{D. Revuz, M. Yor}. Continuous Martingales And Brownian Motion, $3^{\textrm{rd}}$ ed, Springer, 1999.
\bibitem{Wan}\emph{J. Wang}. The Subelliptic Heat Kernel on the anti-de Sitter spaces. {\it Journal of Potential Analysis}. (2016), {\bf 45}, (4), 635-653. 
\end{thebibliography}
\end{document}